\def\newaliasedtheorem#1[#2]#3{
  \newaliascnt{#1@alt}{#2}
  \newtheorem{#1}[#1@alt]{#3}
  \expandafter\newcommand\csname #1@altname\endcsname{#3}
}
\theoremstyle{plain}
\newtheorem{theorem}{Theorem}[section]
\newtheorem*{NTEO}{Theorem}
\theoremstyle{remark}
\theoremstyle{definition}
\newtheorem{OQ}[]{Open Question}
\theoremstyle{remark}
\numberwithin{equation}{section}
\def\eps{\varepsilon}
\def\R{\mathbb R}
\def\N{{\mathbb N}}% nonnegative integers
\DeclareMathOperator{\dv}{div}
\DeclareMathOperator{\spn}{span}
\DeclareMathOperator{\curl}{curl}
\DeclareMathOperator{\Ker}{Ker}
\DeclareMathOperator{\rank}{rank}
\DeclareMathOperator{\loc}{loc}
\DeclareMathOperator{\op}{op_{\mathbb{H}}}
\DeclareMathOperator{\opr}{opr_{\mathbb{H}}}
\newcommand{\PO}{\mathbb{P}_\mathbb{H}}
\newcommand{\POL}{\mathbb{P}}
\DeclareMathOperator{\co}{co}
\DeclareMathOperator{\im}{Im}
\newcommand{\weak}{\overset{*}{\rightharpoonup}}
\DeclareMathOperator{\diam}{diam}
\DeclareMathOperator{\dist}{d}
\newcommand{\A}{\mathcal{A}}
\newcommand{\OP}{\op (k,m,n,N)}
\newcommand{\OPR}{\opr (k,m,n,N)}
\newcommand{\PH}{\mathbb{P}_H}
\title{The four-state problem and convex integration for linear differential operators}
\author[M. Sorella and  R. Tione]{Massimo Sorella \and Riccardo Tione}
\address{Massimo Sorella
\hfill\break EPFL B, Station 8, CH-1015 Lausanne, CH}
\email{massimo.sorella@epfl.ch}
\address{Riccardo Tione  
\hfill\break  EPFL B, Station 8, CH-1015 Lausanne, CH}
\email{riccardo.tione@epfl.ch}
\begin{document}

\maketitle

\begin{abstract}
We show that the four-state problem for general linear differential operators is flexible. The only flexibility result available in this context is the one for the five-state problem for the $\curl$ operator due to B. Kirchheim and D. Preiss, \cite[Section 4.3]{KIRK}, and its generalization \cite{FS}. To build our counterexample, we extend the convex integration method introduced by S. M\"uller and V. \v Sver\'ak in \cite{SMVS} to linear operators that admit a potential, and we exploit the notion of \emph{large} $T_N$ configuration introduced by C. F\"orster and L. Sz{\'{e}}kelyhidi in \cite{FS}.
\end{abstract}

\par
\medskip\noindent
\textbf{Keywords:} convex integration, flexibility, $\A$-free maps, four-state problem.
\par
\medskip\noindent
{\sc MSC (2020): 35B99 - 35E20 - 35G35.
\par
}

\section{Introduction} 

In the last years, much attention has been given to the study of properties of $\A$-free maps $u$, i.e. maps $u$ that verify $$\mathcal{A}(u) = 0,$$ for a linear differential operator $\mathcal{A}$. The properties one can expect from $\A$-free maps are strongly related to the form of $\A$. If it is elliptic, for instance $$\A(u) = \Delta u,$$ one can show smoothness of solutions. If $\A$ is not elliptic one cannot expect in general any improvement in the regularity of $u$. For example, if $\A = \curl$, the best one can infer on an $\A$-free map $u$ is that it can be locally expressed as the gradient of a map $v$. In this case, more subtle questions arise: for instance, understanding the structure of the singular part of $\curl$-free measures leads to the celebrated rank-one theorem of Alberti, \cite{ALB}, see also \cite{MV,GUIANN}. In this paper, we shall focus on non-elliptic operators.
\\
\\
Classically, the most studied non-elliptic operator is $\A = \curl$, and we refer the reader to \cite{DMU} for an account of the theory. A rich literature is also available in the case $\A = \dv$, see for instance \cite{SER,GN,PP,DRST,LR}. Typical questions in this context concern fine qualitative properties of measures $\mu$ satisfying $\mathcal{A}(\mu) = 0$, see \cite{GUIANN, RDHR}, higher order estimates and regularity, see \cite{SER,LR,HIGHER, VS, GC}, semicontinuity of functionals defined on $\A$-free maps, see \cite{FM,DRST,REL,WIE,RAI}, and structural results on Young measures generated by sequences of $\A$-free maps, see \cite{FM,GR,KR}.
\\
\\
This paper is devoted to the study of the $s$-state problem for general linear operators $\A$, that we state now. Fix an open set $\Omega \subset \R^{m}$ and a linear differential operator $\A:C^{\infty}(\Omega , \R^{n}) \to  C^{\infty}(\Omega , \R^{N})$ of order $k$ and consider its associated wave cone, $\Lambda_\A \subset \R^n$, see \eqref{LAMBDA} for the definition. It is well-known that if $a,b \in \R^n$ satisfy
\[
a-b \in \Lambda_\A,
\]
then one can find a non-constant oscillatory solution $u$ to
\[
\begin{cases}
u(x) \in \{a,b\}, &\text{ a.e. on }\Omega\\
\mathcal{A}(u) = 0, &\text{ in the sense of distributions}. 
\end{cases}
\]
This can be achieved by the so-called simple laminate construction, see the beginning of Section \ref{section:simplelaminates}. The question becomes more challenging when we add the constraint
\[
a-b \notin \Lambda_\A.
\]
Therefore, the $s$-state problem precisely asks whether there exists a non-constant solution to
\begin{equation}\label{problem}
\begin{cases}
u(x) \in \{a_1,\dots, a_s\}, &\text{ a.e. on }\Omega\\
\mathcal{A}(u) = 0, &\text{ in the sense of distributions}\\
a_i-a_j \notin \Lambda_\A, & \text{ if } i\neq j, 1\le i,j \le s.
\end{cases}
\end{equation}
System \eqref{problem} has already received much attention, and we give now an account of the literature.
\\
\\
Problem \eqref{problem} was first studied for $\A = \curl$. In that context, J.M. Ball and R.D. James in \cite{BJ} have shown that if $s = 2$, the problem is \emph{rigid}, i.e. the only solution to $\eqref{problem}$ is the constant one. The same rigidity holds if $s = 3$, and is sometimes attributed to K. Zhang as in \cite{CMKB,SMA} and sometimes to V. \v Sver\'ak, as in \cite[Section 2.4]{DMU}. Rigidity still holds for $s = 4$, as proved in \cite{CMKB} by Kirchheim and M. Chleb\'ik. Finally, for $s = 5$, the problem becomes \emph{flexible}, i.e. one can find a non-constant map $u$ that takes precisely $5$ states and solves \eqref{problem}. This construction is due to Kirchheim and Preiss and appears in \cite[Section 4.3]{KIRK}. Similar results are known also for $\A = \dv$. For this operator, rigidity for the $s$-state problem \eqref{problem} was proved by A. Garroni and V. Nesi in \cite{GN} and by M. Palombaro and M. Ponsiglione in \cite{PP}, in the case $s = 2$ and $s = 3$ respectively. To the best of our knowledge, nothing is known for $s \ge 4$. Some results concerning rigidity for linear operators $\A$ of order one also appeared in \cite{MAB}. Finally, for general operators $\A$, rigidity for $s = 2$ was proved by G. De Philippis, L. Palmieri and F. Rindler in \cite{PPR}.
\\
\\
Our main theorem fits in this list of results, since it asserts that the four-state problem is flexible:
\begin{NTEO}
There exists an operator $\A$ such that problem \eqref{problem} with $s = 4$ admits a non-constant solution.
\end{NTEO}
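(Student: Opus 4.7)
The plan is to combine three ingredients: (i) the choice of a suitably designed linear operator $\A$ that admits a potential operator $\mathcal{B}$, (ii) the identification of four values $a_1, a_2, a_3, a_4$ whose pairwise differences lie outside the wave cone $\LA$ but that nevertheless sit inside a large $T_4$ configuration in the sense of \cite{FS}, and (iii) a convex integration iteration in the spirit of \cite{SMVS} adapted to this potential setting.

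First I would design $\A$. Since for $\A = \curl$ the four-state problem is rigid by Chleb\'ik-Kirchheim, the operator must be chosen different from $\curl$; however the freedom to pick the source and target dimensions $m, n, N$ and the order $k$ is available. I would look among differential operators built from higher-order derivatives, so that $\LA$ has richer algebraic structure than the rank-one cone of the curl case, and look explicitly for four values forming a large $T_4$ configuration. It is convenient to present $\A$ via its potential $\mathcal{B}$, so that the wave cone and the laminate directions translate into algebraic conditions on the polynomial symbols associated to $\mathcal{B}$; the search then becomes a finite-dimensional linear algebra problem.

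With a large $T_4$ configuration at hand, the goal becomes to run the convex integration machinery of \cite{SMVS} in the potential formulation. The setup is a Baire-category argument on a complete metric space of subsolutions, that is, bounded potentials $v$ with $\mathcal{B}v$ taking values in the open region cut out by the four target points together with the auxiliary points of the configuration. The key perturbation lemma asserts that, around any subsolution, one can insert a localized oscillation along a wave cone direction coming from the large $T_4$ configuration so as to increase by a definite amount the measure of the set where $\mathcal{B}v$ is close to $\{a_1,\dots,a_4\}$. The existence of the potential $\mathcal{B}$ is exactly what makes these localized oscillations available through plane-wave building blocks, just as $\nabla v$ does for $\A = \curl$. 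Iteration by Baire category then produces a residual set of maps $v$ with $\mathcal{B}v \in \{a_1,\dots,a_4\}$ almost everywhere, and non-constancy is automatic because the subsolution used to seed the process can be chosen non-constant and the iterative perturbations increase the oscillation.

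The main obstacle is the simultaneous arrangement required on the four values. One must ensure at once that the four vertices have pairwise differences outside $\LA$, so that problem \eqref{problem} is genuinely of four-state type rather than a two- or three-state problem in disguise, and that the supporting auxiliary points and directions of a large $T_4$ configuration lie in $\LA$, so that the simple laminate construction from Section \ref{section:simplelaminates} can be used as the driver of the iteration. A secondary difficulty is to verify that the in-approximation property required by convex integration survives the passage from the gradient setting of \cite{SMVS} to a general potential $\mathcal{B}$; this is where the extension of the M\"uller-\v Sver\'ak method announced in the abstract must be carried out carefully, tracking how the rank-one plane-wave construction is replaced by one along an arbitrary wave cone direction of $\mathcal{B}$.
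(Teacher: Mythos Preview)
Your outline is correct and follows essentially the same strategy as the paper: build a large $\Lambda_\A$-$T_4$ configuration, present $\A$ through a potential $\mathcal{B}$, and run M\"uller--\v Sver\'ak convex integration in the potential formulation. Two minor points of divergence are worth flagging. First, the paper does not search for a large $T_4$ inside a pre-chosen wave cone; it reverses the order, fixing the four points $a_1,\dots,a_4$ and the twelve auxiliary vectors $c_\ell^{\sigma_i}$ \emph{first}, then solving a $36\times 36$ linear system for the coefficients of $\mathbb{B}(\xi)$ so that $\Lambda_\A$ contains the $c_\ell^{\sigma_i}$ by construction, and finally checking (by a GCD computation on one-variable polynomials) that $a_i-a_j\notin\Lambda_\A$. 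This inversion is what makes the search tractable and is arguably the main idea of the construction; your sentence about ``a finite-dimensional linear algebra problem'' gestures at it but does not quite commit to it. Second, the paper does not use Baire category but the explicit $\A$-in-approximation iteration of Theorem~\ref{rcexact}; either route works here, so this is a stylistic rather than substantive difference.
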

Our main result should be compared with \cite[Theorem 1.2(A)]{PPR}. In particular, it states that a result in the generality of \cite[Theorem 1.2(A)]{PPR} is not possible if $s \ge 4$. The case $s = 3$ remains open. For such $s$, the only known result is the rigidity for operators of order one, that can be inferred from the rigidity result for $\A = \dv$ of \cite{PP}, as we will prove in Proposition \ref{rig}. Therefore, we can list here the following open questions on the problem:
\begin{OQ}
Is problem \eqref{problem} rigid for operators of order $2$ or higher if $s = 3$?
\end{OQ}
\begin{OQ}
Is problem \eqref{problem} rigid for operators of order $1$ if $s = 4$?
\end{OQ}

Together with the study of \emph{exact} solutions to \eqref{problem}, one may consider rigidity and flexibility of \emph{approximate} solutions to \eqref{problem}, i.e. classify limit points of sequence $u_n$ equibounded in $L^\infty$ and satisfying
\begin{equation}\label{prob_app}
\begin{cases}
\dist(u_n(x),\{a_1,\dots, a_s\}) \to 0, &\text{strongly in }L^1 \text{ as }n\to \infty,\\
\mathcal{A}(u_n) = 0, \forall n, &\text{ in the sense of distributions},
\end{cases}
\end{equation}
coupled once again with the requirement $a_i - a_j \notin \Lambda_\A$ if $i \neq j$. In this case, if $\A = \curl$, the problem \eqref{prob_app} is rigid, i.e. $(u_n)_n$ converges strongly in $L^1$ to a constant, if $s = 2,3$, see \cite{BJ,SMA}, but it is flexible if $s = 4$, due to the existence of Tartar's $T_4$, see for instance \cite[Lemma 2.6]{DMU}. In \cite[Theorem 1.2(B)]{PPR}, it is shown that the same rigidity result holds for general operators $\A$ if $s = 2$. This is sharp, since in \cite[Lemma 4.1]{GN} the authors show flexibility of {\em approximate} solutions for the operator $\A$ by producing a $T_3$ configuration for the operator $\A = \dv$.
\\
\\
Let us outline the strategy we adopt to show our main theorem. We recall that in \cite{FS}, F\"orster and Sz{\'{e}}kelyhidi gave the definition of \emph{large} $T_5$ \emph{configuration}, that generalizes Kirchheim and Preiss construction of \cite{KIRK}. Every large $T_5$ configuration is a five-point set $K \subset \R^n$ that fulfills some geometric constraints and has the property that the $5$-state problem for $K$ with $\A = \curl$ is non-rigid. Firstly, we extend this notion in a natural way for general linear operators, see the Definition \ref{T4} of large $\Lambda_\A$-$T_4$ configurations. Instead of fixing a particular operator $\A$ and then trying to find a large $\Lambda_\A$-$T_4$ configuration, we consider a set $K = \{a_1,a_2,a_3,a_4\}$ that satisfies suitable geometric constraints and then we find an operator $\A$ such that $K$ is a large $\Lambda_\A$-$T_4$ configuration for that particular operator. In order for this plan to work, we will need to prove that, as in \cite{FS}, large $\A$-${T_4}$ configurations yield non-constant solutions of \eqref{problem}. A large part of this proof comes from the convex integration framework introduced essentially by M\"uller and \v Sver\'ak in \cite{SMVS} in the case of the $\curl$ operator. The reason why M\"uller and \v Sver\'ak develop these methods is to find counterexample to regularity of critical points of quasiconvex energies. Since then, these techniques have been successfully applied in various contexts and for various operators, compare \cite{LSP, LINF,COR}, and they still are one of the main tools for trying to build counterexamples, see \cite{LOP,JTR,DLDPKT}. M\"uller and \v Sver\'ak's theory is systematically developed only for the $\curl$ operator, and we extend it to homogeneous linear operators \emph{of constant rank}. One of the main ingredients we will use is the notion of \emph{potential} introduced by B. Rai{\c{t}}{\u{a}} in \cite{RAI}.
\\
\\
Let us end this introduction by giving an outline of the paper. In Sections \ref{not} and \ref{prel}, we introduce the notation and collect some preliminaries on linear operators. Section \ref{CI} is devoted to develop all the tools of M\"uller and \v Sver\'ak's approach to convex integration in the case of general linear operators that admit a potential. In Section \ref{fours} we define large $\Lambda_\A$-$T_4$-configurations and we show our main theorem by finding a counterexample to the four state-problem. Finally, in the Appendix we will show the rigidity of the three-state problem for operators of order $1$.
\\
\\
\textbf{ Acknowledgements}. 
The authors wish to thank Federico Stra for suggesting to use the second method explained in Proposition \ref{computer}. The authors have been supported by the SNF Grant 182565. 

\section{Notation}\label{not} We define $\mathcal{M}(d,m)$ to be the space of multi-indexes $I = (\alpha_1,\dots,\alpha_m) \in \mathbb{N}^m$ with $|\alpha_1| + |\alpha_2| + \dots +|\alpha_m| = d$. $\PH (d,m)$ defines the vector space of homogeneous polynomials of degree $d$ in $\R^m$. With the notation above, an element $p \in \PH(d,m)$ can be written as
\[
p(x) = \sum_{I \in \mathcal{M}(d,m)}a_Ix^I,
\]
where for $x = (x_1,\dots, x_m) \in \R^m$ the notation $x^I$ means
\[
x^I = x_1^{\alpha_1}x_2^{\alpha_2}\dots x_m^{\alpha_m}.
\]
We also introduce $\POL(d,m)$, the vector space of polynomials of degree $d$ in $\R^m$.
\\
\\
$\Omega \subset \R^m$ will always be used to denote an open bounded set. A function $f: \Omega \to \R$ is said to be \emph{piecewise a polynomial of degree $d$} if there exists a countable family of pairwise disjoint open sets $\{\Omega_n\}_n$ such that
\[
\left|\Omega \setminus \bigcup_n \Omega_n\right| = 0
\]
and, on $\Omega_n$, every component of $f$ is a polynomial of degree $d$. The definition of \emph{piecewise smooth} is analogous. Throughout the paper, $|E|$ denotes the Lebesgue measure of a measurable $E \subset \R^m$. 
\\
\\
We will say that $E \subset \R^m$ is \emph{essentially open in} $\Omega$ if $\left|\partial E\cap \Omega \right| = 0$. Here, $\partial E$ is the topological boundary of $E$. $\overline{E}$ denotes the closure of $E$. We denote by $B_{\eps}(E)$ the $\eps$-neighbourhood of the set $E$ and by $\co(E)$ the convex hull of $E$. For two elements $a,b \in \R^n$, we will use the notation $[a,b]$ for $\co(\{a,b\})$.
\\
\\
The set of probability measures compactly supported in $U \subset \R^n$ is denoted by $\mathcal{P}(U)$. We let $\bar \nu \doteq \int_{\R^n} x d\nu(x)$ be the barycentre of $\nu \in \mathcal{P}(U)$.

\section{Preliminaries on general linear operators}\label{prel}

Let $\A$ be a differential operator acting on vector-valued functions  $v \in C^{\infty}(\Omega ; \R^{n})$, where $\Omega \subset \R^{m}$ is an open set, namely
 \begin{equation} \label{d_operator}
 \A v \doteq \sum_{\ell = 1}^k\sum_{\alpha \in \mathcal{M}(\ell,d)} A_\alpha \partial^{\alpha} v + Av + C(x),
 \end{equation}
here, $A_\alpha, A \in \R^{N \times n}$ are constant matrices and $C \in L_{\loc}^{2} (\Omega; \R^{N})$. Note that the equation $\A v =0 $ is actually a {\em system of $N$ equations}.
 We will use the notation $\text{op}(k,m,n,N)$ to denote these operators, but we will actually always consider \emph{homogeneous} differential operators, i.e. $C(x) \equiv 0$ and $A = 0$, $A_{\alpha} = 0$, if $|\alpha| < k$ in \eqref{d_operator}. The set of homogeneous operators will be denote by $\OP$.
\\
\\
Let $\A \in \op (k,m,n,N)$. For each $\xi \in \R^m$, we consider the linear maps $\mathbb{A}(\xi): \R^n \to \R^N$ defined as
\begin{equation}\label{ell}
\mathbb{A}(\xi)(\eta) \doteq \sum_{\alpha \in \mathcal{M}(k,m)}\xi^\alpha A_\alpha\eta,\quad \forall \eta \in \R^n.
\end{equation}
Define the {\em wave cone} associated to $\A$ as:
\begin{equation}\label{LAMBDA}
\Lambda_{\A} \doteq  \bigcup_{\xi \in \R^m\setminus\{0\}}\Ker(\mathbb{A}(\xi)) =  \{\eta \in \R^n: \exists \xi \in \R^m\setminus\{0\} \text{ s.t. } \mathbb{A} (\xi)(\eta) = 0\}.
\end{equation}
In what follows, we will only consider operators $\mathcal{A} \in \op (k,m,n,N)$ with \emph{constant rank}, namely
\[
\xi \mapsto \rank(\mathbb{A}(\xi)) \text{ is constant}.
\]
This class of operators will be denoted with the symbol $\OPR$. We will exploit \cite[Theorem 1]{RAI}, that asserts that the homogeneous operator $\mathcal{A}$ is of constant rank if and only if it admits a potential (of constant rank), meaning that there exists $\mathcal{B} \in \op(k',m,n',n)$ such that
\begin{equation}\label{pot}
\Ker(\mathbb{A}(\xi)) = \im(\mathbb{B}(\xi)), \quad\forall \xi \in \R^m\setminus\{0\}.
\end{equation}

For technical reasons, in Section \ref{CI} we will need to restrict ourselves to \emph{balanced} operators, that we now introduce.

\subsection{Balanced Operators}

In addition to the constant rank condition, we require an additional property on the linear differential operator $\A$.

\begin{definition}\label{balanced}
We say that the wave cone $\Lambda_\A$ is \emph{balanced} if 
\begin{equation}
\spn(\Lambda_\A) = \R^n,
\end{equation}
and we say that an operator $\A \in \OP$ is \emph{balanced} if the associated wave cone $\Lambda_\A$ is \emph{balanced}.
\end{definition}

The heuristic idea for which we need to consider balanced operators stems from the fact that on $\spn(\Lambda_{\A})^{\perp}$, the operator is, in some sense \emph{elliptic}, compare \cite[Equation (4)]{GRA}. This can be seen clearly in the extreme case $\spn(\Lambda_\A) = \{0\}$, in which one has
\[
\mathcal{A}(u) = 0 \Rightarrow u \in C^\infty(\Omega, \R^n).
\]
Since we are interested in constructing irregular solutions via convex integration, the images of these will surely avoid directions contained in $\spn(\Lambda_{\A})^{\perp}$. However, the requirement that $\mathcal{A}$ is balanced is mainly made for simplicity of exposition and is in fact not restrictive. Indeed we have the following simple result:
\begin{prop} \label{p:balanced}
Let $\mathcal{A} \in \OP$. Let $\pi \doteq \spn(\Lambda_\A)$ and let $d \ge 1$ be its dimension. Fix an orthonormal basis $e_1,\dots, e_d$ for $\pi$. Then, if we define $\mathcal{A}' \in \op(k,m,d,N)$ as
\begin{equation}\label{notres}
\mathcal{A}'(u) \doteq \mathcal{A}\left(\sum_{i = 1}^du_ie_i\right), \text{ if } u = \left(\begin{array}{c}u_1\\ \vdots \\ u_d\end{array}\right),
\end{equation}
the following hold:
\begin{itemize} \label{condition:spanbal}
\item $\mathcal{A}'$ is balanced;
\item $\mathcal{A}$ has constant rank if and only if $\mathcal{A}'$ has.
\end{itemize}
\end{prop}

The proposition tells us that we may study wild solutions of the balanced operator $\mathcal{A}'$ instead of studying those of $\mathcal{A}$, and by \eqref{notres} these will be also solutions to $\mathcal{A}(u) = 0$. We omit the proof of Proposition \ref{p:balanced} since the verifications are simple.
\\
\\
The fact that $\A$ is balanced yields the following:

\begin{prop}\label{surj}
Let $\mathcal{A} \in \OPR$ be balanced, and let $\mathcal{B} \in \op(k',m,n',n)$ be a potential for $\mathcal{A}$. Then, the map $T :(\PH(k',m))^{n'} \to \R^n$  defined as $T(q) \doteq \mathcal{B}(q)$ is surjective.
\end{prop}

\begin{proof}[Proof of Proposition \ref{surj}]
The proof is by contradiction: suppose $T$ is not surjective. Fix $\xi \in \R^m$ and $a \in \R^{n'}$. We choose the polynomial
\[
p(x) \doteq \sum_{I \in \mathcal{M}(k',m)}\frac{\xi^{I}}{|I|}x^I
\]
and define $q(x) \doteq p(x) a \in (\PH(k',m))^{n'}$. A direct computation shows that
\[
T(q) = \mathbb{B}(\xi)(a).
\]
This yields
\begin{equation}\label{Bpsi}
\im(\mathbb{B}(\xi)) \subset \im(T),\qquad \forall \xi \in \R^m.
\end{equation}
In particular, since $T$ is linear and not surjective, we find a non-zero vector $v \in \R^n$ such that
\[
v \perp \im(T),
\]
and, using \eqref{Bpsi},
\[
v \perp \im(\mathbb{B}(\xi)), \qquad \forall \xi \in \R^m.
\]
Since $\mathcal{B}$ is the potential of $\mathcal{A}$, by definition \eqref{pot} holds, and we find a contradiction with the definition of $\mathcal{A}$ being balanced.
\end{proof}
\section{Convex integration for general differential operators of constant rank}\label{CI}

Throughout the section, we will consider a fixed balanced operator $\mathcal{A} \in \OPR$, with a given potential $\mathcal{B} \in \op(k',m,n',n)$. 
\\
\\
Aim of this part of the work is to develop the convex integration scheme essentially due to M\"uller and \v Sver\'ak in the case of the $\curl$ operator, see for instance \cite[Sections 2,3]{SMVS}. The final goal is to being able to show the existence of a non-constant solution $u \in L^\infty(\Omega,\R^n)$ to the following system:
\begin{equation}\label{AINC}
\begin{cases}
u(x) \in K, \; \text{a.e. in }\Omega,\\
\mathcal{A}(u) = 0,
\end{cases}
\end{equation}
where $\mathcal{A} \in \OPR$, $\Omega$ is a given open, bounded, convex set, and $K \subset \R^n$ is a compact set without $\Lambda_\A$ connections, i.e. for any $a,b \in K$ we have that $b-a \notin \Lambda_\A$. In the case of the four-state problem that we will treat in Section \ref{fours}, $K$ is the four-point set of the admissible states. In particular, our aim is to show that the existence of a $\mathcal{A}$-in-approximation $\{U_n\}_n$ of $K$, see Definition \ref{INAPP}, yields the existence of a (in fact, many) non-constant solutions to \eqref{AINC}.
\\
\\
Due to the technical nature of some  proofs of this section, it is probably better to briefly explain our strategy. First, in Subsection \ref{section:simplelaminates}, we introduce the building blocks of this convex integration scheme, the simple $\mathcal{A}$-laminates. Roughly speaking, these are highly oscillatory solutions of \eqref{AINC} that can be constructed starting from two vectors $a,b \in \R^n$ with $b-a \in \Lambda_\A$. Their properties are listed in Proposition \ref{lam+}. Subsequently, we define $\mathcal{A}$-laminates of finite order, and describe their main properties, see Definition \ref{d:laminates_finite} and Proposition \ref{ind}. Then, we move on to $\mathcal{A}$-laminates, see Subsection \ref{section:laminates}, and we quote a result of \cite{KIRK} that asserts the weak-$*$ density of $\mathcal{A}$-laminates of finite order in the space of $\mathcal{A}$-laminates, compare Theorem \ref{ann}. We will use this result in Section \ref{inappexsol} to show the preliminary Proposition  \ref{usefulprop} and finally Theorem \ref{rcexact}, that asserts the existence of exact solutions to \eqref{AINC} once we are given a $\mathcal{A}$-in-approximation.

\subsection{Simple laminates}  \label{section:simplelaminates} 
The building block is given by the simple $\A$-laminate construction. Let $a,b \in \R^n$ be such that $$b-a = c \in \Ker(\mathbb{A}(\xi_0)) \subset \Lambda_{\mathcal{A}}.$$ It is simple to check that for any profile $h \in L^\infty(\R)$, the map
\[
v(x) \doteq h((x,\xi_0))c
\]
solves $\mathcal{A}(v) = 0$. Here and in the following, $(x,y)$ denotes the standard scalar product of $\R^m$. This observation can be refined as follows. Let $\lambda \in (0,1)$ be arbitrary, $e \doteq \lambda a + (1-\lambda) b$, and choose  
\begin{equation}\label{h}
h(t) \doteq \begin{cases}\lambda, &\text{ if } t \in [0,1-\lambda)\\ -(1-\lambda), &\text{ if } t \in [1-\lambda,1],\end{cases}
\end{equation}
and its 1-periodic extension outside $[0,1]$. If we let
\begin{equation}\label{v}
v_{\eps,\xi_0,a,b,\lambda}(x) \doteq e + h\left(\frac{(x,\xi_0)}{\eps}\right)c,
\end{equation}
one can check that, given any bounded open set $\Omega \subset \R^m$, $v_{\eps,\xi_0,a,b,\lambda}$ enjoys the following properties
\begin{enumerate}
\item\label{1} $\mathcal{A}(v_{\eps,\xi_0,a,b,\lambda}) = 0$, $\forall \eps > 0$;
\item\label{2} $|\{x: v_{\eps,\xi_0,a,b,\lambda}(x) = a\}| \to \lambda|\Omega|$ and $|\{x: v_{\eps,\xi_0,a,b,\lambda}(x) = b\}| \to (1-\lambda)|\Omega|$ as $\eps \to 0^+$;
\item\label{3} $v_{\eps,\xi_0,a,b,\lambda} \overset{*}{\rightharpoonup} e$ in $L^\infty$ as $\eps \to 0^+$. 
\end{enumerate}

In other words, every element of the $\Lambda_{\mathcal{A}}$-cone $c$ gives rise to a family of highly oscillatory solutions to the PDE defined by $\mathcal{A}$. The oscillating behaviour is due to the choice of a periodic profile $h$, and yields to the fact that these solutions do not converge strongly, as can be easily seen from \eqref{2}-\eqref{3}.
\\
\\
Using the theoretical potential $\mathcal{B}$, we can find a potential for $v_{\eps,\xi_0,a,b,\lambda}$. Indeed, since $\mathbb{A}(\xi_0)(c) = 0$, by \eqref{pot}, there exists $c' \in \R^{n'}$ such that
\begin{equation}\label{ima}
\mathbb{B}(\xi_0)(c') = c.
\end{equation}
Furthermore, we consider the unique function $H: \R \to \R$ such that $H^{(k')}(t) = h(t)$, $\forall t \in \R$ and $H^{(\ell)}(0) = 0$, $\forall 0 \le \ell \le k' -1$, where $H^{(\ell)}$ denotes the $\ell$-th derivative of $H$, and $H^{(0)} \doteq H$. Finally we choose any $q_e \in \POL(k',m)^{n'}$ such that
\[
\mathcal{B}(q_e) = e, \text{ everywhere on $\R^m$}.
\]
By Proposition \ref{surj}, there exists at least one vector of polynomials with this property. If we define
\[
V_{\eps,\xi_0,a,b,\lambda}(x) \doteq  q_e(x) + \eps^{k'}H\left(\frac{(x,\xi_0)}{\eps}\right)c',
\]
then we see by construction that
\[
\mathcal{B}(V_{\eps,\xi_0,a,b,\lambda})(x) = e + h\left(\frac{(x,\xi_0)}{\eps}\right)\mathbb{B}(\xi_0)(c') = e + h\left(\frac{(x,\xi_0)}{\eps}\right)c = v_{\eps,\xi_0,a,b,\lambda}(x),
\]
almost everywhere and in the sense of distributions. Notice that by construction $V_{\eps,\xi_0,a,b,\lambda}$ is, for every $\eps > 0$, a vector of piecewise polynomials of degree $k'$. This discussion allows us to prove the following:

\begin{lemma}\label{lam}
Let $\Omega\subset \R^m$ be an open and bounded set. Let $a,b \in \R^n$, $b-a  = c \in \Lambda_{\A}$ and $e = \lambda a + (1-\lambda) b$, for some $\lambda \in (0,1)$. Fix any element $q_e \in \POL(k',m)^{n'}$ with the property that $\mathcal{B}(q_e) = e$ everywhere in $\R^m$. Then, for all $\alpha > 0$, there exists $V_\alpha \in W^{k',\infty} \cap C^{k' -1}(\overline{\Omega},\R^{n'})$, and two disjoint open sets $\Omega_\alpha^1$, $\Omega_\alpha^2$ with $|\Omega| = |\Omega_\alpha^1\cup \Omega_\alpha^2|$ such that
\begin{enumerate}
\item\label{f0} the $W^{k',\infty} \cap C^{k' -1}$ norm of $V_\alpha$ only depends on $\diam(\Omega),|a|,|b|$ and $|D^{k'}q_e|$;
\item\label{f1} $V_\alpha = q_e$, together with all its derivatives of order $\ell < k'$, on $\partial\Omega$;
\item\label{s1} Every component of $V_\alpha$ is piecewise a polynomial of degree $k'$,
\item\label{t1} Let $v_{\alpha}(x) \doteq \mathcal{B}(V_\alpha)(x)$. The sets $A_\alpha = \{x \in \Omega_{\alpha}^1: v_\alpha(x) = a\}$, $B_\alpha = \{x \in \Omega_{\alpha}^1: v_\alpha(x) = b\}$, $ \Omega_\alpha^1 \doteq A_\alpha \cup B_\alpha$ and $\Omega_\alpha^2 \doteq (\Omega_1^\alpha)^c$ are essentially open in $\Omega$, , and
\[
|A_\alpha| \ge (1-\alpha)\lambda|\Omega| \text{ and } |B_\alpha| \ge (1 - \alpha)(1-\lambda)|\Omega|.
\]
\item\label{ff1} $|\Omega_\alpha^2| \le \alpha|\Omega|$;
\item\label{fff1} $\|V_\alpha - q_e\|_{C^{k' - 1}}\le \alpha$;
\item\label{ss1} $v_{\alpha}(x) \in B_{\alpha}([a,b])$ a.e. in $\Omega$.
\end{enumerate}
\end{lemma}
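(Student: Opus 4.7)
The plan is to build $V_\alpha$ by overlaying many small-scale copies of the simple $\mathcal{A}$-laminate potential introduced just before the statement on a fine family of disjoint cubes inside $\Omega$, and setting $V_\alpha\equiv q_e$ outside. First I would select pairwise disjoint open cubes $Q_1,\dots,Q_M \Subset \Omega$, for instance by a Vitali-type argument, with $|\Omega\setminus\bigcup_i Q_i| \le \alpha|\Omega|/2$. Setting $\Omega_\alpha^2 \doteq \Omega\setminus\bigcup_i Q_i$ and $V_\alpha \equiv q_e$ on $\Omega_\alpha^2$ ensures $v_\alpha = \mathcal{B}(q_e) = e$ there, $V_\alpha = q_e$ together with all its derivatives on $\partial\Omega$ (since $\Omega_\alpha^2$ contains a neighbourhood of $\partial\Omega$), and $|\Omega_\alpha^2|\le \alpha|\Omega|$: this takes care of \eqref{f1} and \eqref{ff1}.

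The core of the proof is the localization of the simple laminate on each cube. Fix one, apply a rigid motion so $\xi_0=e_1$ and $Q_i = x_i+(0,\eta_i)^m$, and pick parameters $\eps_i$ and $\tau_i$ with $\eps_i\ll\tau_i\ll\eta_i$. Set $Q_i' \doteq x_i + [0,\eta_i-\tau_i]\times(\tau_i,\eta_i-\tau_i)^{m-1}$ and, on $Q_i'$, let
\[
V_\alpha(x) \doteq q_e(x) + \eps_i^{k'}\, H\bigl((x-x_i,\xi_0)/\eps_i\bigr)\,c'.
\]
The shift by $x_i$ together with the initial conditions $H^{(\ell)}(0) = 0$ for $\ell<k'$ ensure $V_\alpha = q_e$ with all derivatives of order $<k'$ on the face $\{x_1 = x_{i,1}\}$, and the identity $\mathcal{B}(\eps_i^{k'} H(\cdot/\eps_i) c') = h(\cdot/\eps_i)\,c$ established in the preamble gives $v_\alpha = e + h(\cdot/\eps_i)\,c\in\{a,b\}$ on $Q_i'$, oscillating with proportions $\lambda:(1-\lambda)$. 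On the collar $Q_i\setminus Q_i'$, define $V_\alpha$ as a finite concatenation of polynomials of degree $k'$, chosen by Hermite interpolation to match the laminate (with derivatives up to order $k'-1$) on $\partial Q_i'$ and $q_e$ (with derivatives up to order $k'-1$) on $\partial Q_i$; a short dimension count shows that $k'$ degree-$k'$ pieces per direction suffice to meet the $2k'$ Hermite conditions while also maintaining the internal $C^{k'-1}$ gluing.

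Verifying the remaining conclusions is largely bookkeeping. Condition \eqref{s1} holds because $H$ is piecewise polynomial of degree $k'$ in its argument (being a $k'$-th antiderivative of the piecewise constant $h$) and the collar interpolants are degree-$k'$ polynomials by design. For \eqref{t1}, the total collar volume is $O(\tau_i\eta_i^{m-1})$ per cube, made $\le \alpha|Q_i|/2$ by shrinking $\tau_i$, while the measure of $\{v_\alpha=a\}\cap Q_i'$ tends to $\lambda|Q_i'|$ and that of $\{v_\alpha=b\}\cap Q_i'$ to $(1-\lambda)|Q_i'|$ as $\eps_i\to 0$. The bound \eqref{f0} follows from $\|H\|_{C^{k'}}$ on the relevant intervals being controlled by $|a|,|b|$, from the $\eps_i^{k'}$ prefactor absorbing the $k'$-fold differentiation, and from the collar polynomials being bounded in terms of $|a|,|b|,\diam\Omega,|D^{k'}q_e|$; \eqref{fff1} follows from $\|V_\alpha-q_e\|_{C^{k'-1}}\lesssim \eps_i+\tau_i$, pushed below $\alpha$ by the choice of parameters; \eqref{ss1} follows because in $Q_i'$ the laminate takes only values in $\{a,b\}$ while the collar Hermite polynomials, when properly chosen, keep $v_\alpha$ within $B_\alpha([a,b])$.

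The principal technical obstacle is the collar construction compatible with \eqref{s1}. A smooth bump cutoff would produce $V_\alpha - q_e = \phi\cdot H(\cdot/\eps_i)\,c'$, whose degree on each piece exceeds $k'$ and thus destroys the piecewise polynomial structure. Replacing smooth cutoffs by explicit piecewise-polynomial Hermite interpolation is delicate for two reasons: (i) transverse interpolation must respect the degree bound $k'$ while simultaneously gluing in a $C^{k'-1}$ fashion to the laminate on $\partial Q_i'$ (where $H(\cdot/\eps_i)$ and its derivatives are generically nonzero) and to $q_e$ on $\partial Q_i$, and (ii) the top-order derivatives of these Hermite splines determine the pointwise value of $v_\alpha = \mathcal{B}(V_\alpha)$ in the collar, so one must control them by a suitable inverse power of $\tau_i$ times the boundary data. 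A judicious scaling of $\tau_i$ relative to $\eps_i$ (for example $\tau_i \sim \eps_i^{1/2}$) keeps $v_\alpha\in B_\alpha([a,b])$ while driving the collar measure to zero, concluding the construction.
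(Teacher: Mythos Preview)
Your approach and the paper's diverge in how the transition region is handled, and your version carries a genuine obstruction to property \eqref{s1}.

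The paper does not build a piecewise-polynomial collar directly. It fixes a single compactly contained $\Omega'\subset\Omega$ with $|\Omega\setminus\Omega'|\le\tfrac{\alpha}{2}|\Omega|$, takes a smooth cutoff $\varphi\in C^\infty_c(\Omega)$ with $\varphi\equiv 1$ on $\Omega'$, and sets
\[
W_\eps(x)=q_e(x)+\eps^{k'}\varphi(x)\,H\!\left(\tfrac{(x,\xi_0)}{\eps}\right)c'.
\]
For small $\eps$ this already gives \eqref{f0}, \eqref{f1}, \eqref{t1}, \eqref{ff1}, \eqref{fff1}, \eqref{ss1} with $\Omega_\alpha^1=\Omega'$. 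Property \eqref{s1} fails only on the collar $\Omega\setminus\Omega'$, where $W_\eps$ is piecewise smooth but not piecewise polynomial of degree $k'$. The paper then invokes a separate approximation result (Lemma~\ref{pp}, proved immediately afterwards by iterated Taylor-polynomial patching) to replace $W_\eps$ on each smooth piece of the collar by a $C^{k'}$-close function that \emph{is} piecewise polynomial of degree $k'$ and matches the boundary data; this restores \eqref{s1} without disturbing the other conclusions.

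Your explicit Hermite-spline collar runs into a degree obstruction. After rotating so that $\xi_0=e_1$, the perturbation $\eps_i^{k'}H\bigl((x_1-x_{i,1})/\eps_i\bigr)$ on $\partial Q_i'$ is, on each of its pieces, a polynomial of degree \emph{exactly} $k'$ in $x_1$ (its leading coefficient is $h/k'!$ with $h\in\{\lambda,-(1-\lambda)\}$). A collar correction $G$ of total degree $\le k'$ with $G(\cdot,\tau_i)$ equal to this datum and $G(\cdot,0)=0$ cannot exist: among the monomials $x_1^ax_2^b$ with $a+b\le k'$, only $x_1^{k'}$ contributes to the $x_1^{k'}$-term of the trace $G(\cdot,\tau_i)$, so the coefficient $c_{k',0}$ of $G$ is forced to be nonzero; but then $G(\cdot,0)$ still carries that same $x_1^{k'}$-term and cannot vanish. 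This obstruction survives any subdivision of the collar into axis-aligned pieces, because $C^{k'-1}$ gluing across interfaces $\{x_2=\text{const}\}$ forces the traces (hence the $x_1^{k'}$-coefficients) of adjacent pieces to agree, and the coefficient propagates unchanged down to $\{x_2=0\}$. Your one-dimensional count ($k'$ pieces of degree $k'$ against $2k'$ Hermite conditions) is correct for \emph{scalar} endpoint data, but here the inner boundary datum carries top-order $x_1$-dependence that no degree-$k'$ interpolant in the transverse variable can absorb. To close the argument you would need either to import the paper's Lemma~\ref{pp} on the collar, or to abandon exact $C^{k'-1}$ matching of the laminate on $\partial Q_i'$, which then has to be reconciled with \eqref{t1} and \eqref{ss1}.
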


\begin{proof}
Fix $\alpha > 0$. Choose an open set $\Omega'$ compactly contained in $\Omega$ with $|\Omega \setminus \Omega'| \le \frac{\alpha}{2}|\Omega|$, $\Omega \setminus \Omega'$ essentially open in $\Omega$, and let $\varphi$ be a fixed smooth cut-off function with values in $[0,1]$ such that $\varphi(x) = 1$, $\forall x \in \Omega'$. With the notation introduced before the statement of the lemma, we define
\[
W_{\eps}(x) \doteq q_e(x) + \eps^{k'}\varphi(x)H\left(\frac{(x,\xi_0)}{\eps}\right)c'.
\]
We wish to take $\Omega_\alpha^1 \doteq \Omega'$, $\Omega_\alpha^2$ as the interior of $\Omega \setminus \Omega_\alpha^1$ and $V_\alpha \doteq W_{\eps}$ for $\eps > 0$ sufficiently small, and up to a correction on the small set $\Omega_\alpha^2$ in order to make every component piecewise polynomial. With these choices \eqref{f1} and \eqref{t1} are immediate, once $\eps$ is chosen sufficiently small, and \eqref{ff1} is a consequence of \eqref{t1}. As $\eps \to 0$, the boundedness in $L^\infty$ of $H$ yields the strong convergence in $L^\infty$ of $W_\eps$ to $q_e$. To see that the convergence is in the $C^{k' - 1}$ topology, it is sufficient to show the equiboundedness in $W^{k',\infty}(\Omega, \R^{n'})$. To see the latter, it is sufficient to take a derivative of order $k'$ of
\[
W'_\eps(x) \doteq \eps^{k'}\varphi(x)H\left(\frac{(x,\xi_0)}{\eps}\right).
\]
Let then $I \in \mathcal{M}(k',m)$. $\partial_IW'_\eps(x)$ can be estimated by a sum of terms of the form
\begin{equation}\label{prodrule}
\eps^\ell\partial_{I'}\varphi(x) H^{(k'-\ell)}\left(\frac{(x,\xi_0)}{\eps}\right),
\end{equation}
where $I' \in \mathcal{M}(\ell,m)$. It is then easy to see that if $\eps = \eps(\alpha)$ is sufficiently small, we may estimate the latter by $\|h\|_{L^\infty}$, and hence conclude that \eqref{f0} holds. A similar computation shows \eqref{fff1}. Finally, with computations analogous to the ones of \eqref{prodrule}, one can estimate:
\begin{equation}\label{close}
\left|\mathcal{B}(W_\eps)(x) - e - \varphi(x)h\left(\frac{(x,\xi_0)}{\eps}\right)c\right| \le C\eps,
\end{equation}
for some constant $C > 0$ at a.e. $x \in \Omega$. Since
\[
e + \varphi(x)h\left(\frac{(x,\xi_0)}{\eps}\right)c \in [a,b],
\]
from \eqref{close} we further deduce \eqref{ss1}. The map $W_\eps$ satisfies all the properties listed in the statement of the lemma, except for \eqref{s1}. It is simple to see, from the definition of $W_\eps$, that on $\Omega_1^\alpha$ every component of $W_\eps$ is piecewise a polynomial of degree $k'$ and that it is globally a piecewise smooth map. Therefore, we may subdivide $\Omega_\alpha^2$ into pairwise disjoint, compactly supported and open cubes $Q_j$ on each of which $W_\eps$ is a smooth map up to the boundary. By Lemma \ref{pp} below, we see that, on every $Q_j$, $W_\eps$ can be substituted with a map $W_{\eps,j}$ whose components are piecewise polynomials of order $k'$ with $W_{\eps,j} = W_\eps$ on $\partial Q_j$ and arbitrarily small $\|W_\eps - W_{\eps,j}\|_{C^k(\overline{\Omega})}$. It is simple to check that if this norm is taken sufficiently small, then \eqref{f0}-\eqref{f1}-\eqref{s1}-\eqref{t1}-\eqref{ff1}-\eqref{fff1} still hold for the map defined as $W_{\eps,j}$ on $Q_j$ and $W_\eps$ everywhere else. This defines the map $V_\alpha$.
\end{proof}

We now show Lemma \ref{pp}, that was used in the previous proof. This states that any map $u \in C^k(\Omega)$ can be finely approximated by functions $v\in C^k(\Omega)$ that are piecewise polynomials of order $k$. This was done in \cite[Proposition 3.3]{KIRK} in the case $k = 2$.

\begin{lemma}\label{pp}
Let $\Omega$ be open and $u \in C^{k'}(\overline{\Omega})$. Then, for all $\eps > 0$, there exists a function $v_\eps \in C^{k'}(\overline{\Omega})$ such that
\begin{enumerate}
\item \label{pp1} $\|u - v_\eps\|_{C^{k'}(\overline\Omega)} \le \eps$;
\item \label{pp2} $v_\eps$ is  piecewise a  polynomial of order $k'$;
\item \label{pp3}$v_\eps = u$ together with all of its derivatives of order $0\le \ell \le k'$ on $\partial \Omega$, $\forall \eps > 0$.
\end{enumerate}
\end{lemma}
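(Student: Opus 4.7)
The plan is to build $v_\eps$ as a piecewise Taylor polynomial of $u$ on a Whitney-type decomposition of $\Omega$ into countably many small cubes, exploiting the fact that the definition of ``piecewise polynomial'' allows the complement $\Omega \setminus \bigcup_n \Omega_n$ to be any set of Lebesgue measure zero, not just an $(m{-}1)$-dimensional skeleton.

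First, by uniform continuity of $D^{k'} u$ on the compact set $\overline\Omega$ (with common modulus of continuity $\omega$ for all derivatives of order $\leq k'$), fix $\delta > 0$ small enough that the standard Taylor-remainder estimate gives $\|u - T\|_{C^{k'}(\overline{Q})} \le \eps$ for every cube $Q \subset \Omega$ of diameter at most $\delta$, where $T$ is the Taylor polynomial of $u$ of order $k'$ at any point of $Q$. Then I construct an at most countable family $\{Q_n\}$ of pairwise disjoint open cubes contained in $\Omega$ satisfying: (i) $|\Omega \setminus \bigcup_n Q_n| = 0$; (ii) $\diam(Q_n) \leq \delta$ for all $n$, and $\diam(Q_n) \to 0$ as $Q_n$ approaches $\partial\Omega$ or any point of the residual set $R := \overline\Omega \setminus \bigcup_n Q_n$; (iii) no two distinct $\overline{Q_n}$ share any $(m{-}1)$-dimensional face --- the cubes are separated by thin buffer strips of arbitrarily small total measure, absorbed into $R$. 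A dyadic Whitney-type construction, slightly modified to insert these buffers, realizes (i)-(iii).

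On each $Q_n$, pick a base point $x_n$ and define $v_\eps|_{Q_n} := T_n$, the Taylor polynomial of $u$ at $x_n$ of order $k'$; on $R$, and in particular on $\partial\Omega \subset R$, set $v_\eps := u$ (together with all derivatives of $u$). Property \eqref{pp2} is then immediate; \eqref{pp1} follows from the uniform Taylor estimate on each cube combined with $v_\eps = u$ on $R$; and \eqref{pp3} holds since $\partial\Omega \subset R$.

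The main obstacle is verifying $v_\eps \in C^{k'}(\overline\Omega)$. Inside each $Q_n$, $v_\eps$ is a polynomial and hence smooth; the delicate point is continuity of $v_\eps$ together with its derivatives up to order $k'$ across the residual set $R$. The structural reason condition (iii) is necessary is that two polynomials of degree $\leq k'$ which are $C^{k'}$-matched along an $(m{-}1)$-dimensional face must coincide --- agreement of all derivatives of order $\leq k'$ at any single point of the face forces equality of Taylor expansions there, hence of the polynomials --- so collapsing the construction into a single global polynomial of degree $\leq k'$ would rule out any non-trivial approximation. Once the cubes touch only through the measure-zero residual set, $C^{k'}$ regularity at any $x \in R$ is obtained by a two-case analysis on a sequence $y_j \to x$: either $y_j \in R$ (so $D^\alpha v_\eps(y_j) = D^\alpha u(y_j) \to D^\alpha u(x)$ by continuity of $D^\alpha u$), or $y_j \in Q_{n(j)}$ with $\diam(Q_{n(j)}) \to 0$, whence $D^\alpha v_\eps(y_j) = D^\alpha T_{n(j)}(y_j) = D^\alpha u(x_{n(j)}) + O(\diam Q_{n(j)}) \to D^\alpha u(x)$ by the Taylor estimate and the uniform continuity of $D^\alpha u$. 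The truly technical heart of the proof --- carried out in \cite[Proposition 3.3]{KIRK} for the case $k'=2$ --- is engineering the fractal geometry of the residual set $R$ so that this two-case analysis works simultaneously at every $x \in R$ (in particular, so that no $x \in R$ can be approached only through the interior of one single cube, where the limit $D^\alpha T_n(x) \ne D^\alpha u(x)$ would otherwise prevent continuity).
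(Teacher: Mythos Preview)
Your construction has a genuine gap: the function $v_\eps$ you define is not even continuous. Take any cube $Q_n$ of positive diameter and a point $x \in \partial Q_n \cap \Omega$. Since the $Q_m$ are pairwise disjoint \emph{open} sets, $\partial Q_n \cap \Omega$ cannot meet any $Q_m$, so $x \in R$ and you have set $v_\eps(x) = u(x)$. Approaching $x$ from inside $Q_n$, however, gives $\lim v_\eps = T_n(x)$, the value at $x$ of the Taylor polynomial centred at $x_n$; unless $u$ is already that polynomial on $Q_n$, $T_n(x) \neq u(x)$ and $v_\eps$ is discontinuous at $x$. This also shows that your hypothesis (ii) is self-defeating: every boundary point of every cube lies in $R$ and is approached from within that same cube, so the requirement ``$\diam Q_{n(j)} \to 0$ whenever $y_j \in Q_{n(j)}$ tends to a point of $R$'' would force all cubes to have diameter zero. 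The parenthetical at the end of your argument correctly spots the obstruction but does not remove it; no fractal engineering of $R$ prevents a fixed open cube from touching its own boundary.

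The paper sidesteps this by never gluing polynomials directly to $u$. It runs an iteration: at step $n$, on each small cube $Q_{r,s}$ in the region where $v_n$ is not yet polynomial, it replaces $v_n$ by the blend $(1-\psi)v_n + \psi P_{r,s}$, where $\psi \in C^\infty_c(Q_{r,s})$ equals $1$ on a large compactly contained subset $S_{r,s}$ and $P_{r,s}$ is the $k'$-th Taylor polynomial of $v_n$ at the centre. This blend is honestly $C^{k'}$ on $\overline\Omega$ (it equals $v_n$ near $\partial Q_{r,s}$), equals the polynomial on $S_{r,s}$, and stays $C^{k'}$-close to $v_n$ because the Taylor remainder on $Q_{r,s}$ controls the loss coming from the scaling $\|D^\ell\psi\|_{L^\infty} \lesssim \diam(Q_{r,s})^{-\ell}$. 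The non-polynomial region shrinks geometrically in measure, the sequence is Cauchy in $C^{k'}$, and its limit is the desired $v_\eps$. This cutoff-and-iterate mechanism is the missing idea in your proposal; it is also what \cite[Proposition 3.3]{KIRK} actually does in the case $k'=2$.
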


\begin{proof}
Fix $\eps > 0$. We obtain $v_\eps$ as the limit of a sequence $v_n$ defined inductively. Set $\eps_n = \frac{\eps}{2^n}$. We claim that, given a function $v_n$ with the following properties:
\begin{enumerate}[(a)]
\item\label{indpp11} $v_n$ is $C^{k'}$ up to the boundary of $\Omega$;
\item\label{indpp12} $\Omega_n^1 \subset \{x: v_n \text{ is piecewise a polynomial of order }k' \text{ in a neighborhood of $x$} \}$ and $\Omega_n^2 = (\Omega_n^1)^c$ are essentially open sets in $\Omega$ with $|\Omega_n^2| \le \prod_{j = 1}^n\eps_j|\Omega|$;
\item\label{indpp13} $v_n = u$ together with all of its derivatives of order $0\le \ell \le k'$ on $\partial \Omega$;
\end{enumerate}
then it is possible to find $v_{n + 1}$ such that 
\begin{enumerate}[(A)]
\item\label{indpp21} $v_{n+1}$ is $C^{k'}$ up to the boundary of $\Omega$;
\item\label{indpp22} $\|v_{n + 1} - v_n\|_{C^{k'}} \le \eps_{n + 1}$;
\item\label{indpp23} $\Omega_{n + 1}^1 \subset \{x: v_{n + 1} \text{ is piecewise a polynomial of order }k' \text{ in a neighborhood of $x$}\}$ and $\Omega_{n + 1}^2 = (\Omega_{n + 1}^1)^c$ are essentially open sets in $\Omega$ such that $|\Omega_{n + 1}^2| \le \prod_{j = 1}^{n + 1}\eps_j|\Omega|$ and $\Omega_{n + 1}^2 \subset \Omega_{n}^2$;
\item\label{indpp24} $v_{n + 1} = u$ together with all of its derivatives of order $0\le \ell \le k'$ on $\partial \Omega$.
\end{enumerate}
If this inductive step holds, then we start with $v_0 \doteq v$, working with the convention that
\[
\sum_{j = 1}^0\eps_j = 0 \text{ and } \prod_{j = 1}^0\eps_j = 1.
\]
Since $\{v_n\}_n$ is a Cauchy sequence with respect to the $C^{k'}$ topology by \eqref{indpp22}, we can define $v_\eps \doteq \lim_n v_n$. It is then easy to see that this function $v_\eps$ has the required properties.
\\
\\
To show the inductive step, we consider $\Omega_n^2$ and we first subdivide it in countably many, compactly contained, pairwise disjoint open cubes such that $|\Omega_n^2\setminus \bigcup_rQ_r| = 0$. On $Q_r$, all the derivatives of $v_n$ are uniformly continuous and hence we can find $\delta > 0$ such that if $x,y \in Q_r$ and $|x-y| \le \delta$, then
\begin{equation}\label{contv}
\sum_{j = 0}^{k'}|D^jv_n(x) - D^jv_n(y)| \le \gamma\eps_{n +1},
\end{equation}
where $\gamma > 0$ is a dimensional constant that will be fixed later. Now further subdivide $Q_r$ as a finite union of cubes $Q_{r,s}$ with $\diam(Q_{r,s}) \le \delta$. Fix a compactly contained open set $S_{r,s} \subset Q_{r,s}$ with 
\begin{equation}\label{crs}
|Q_{r,s}\setminus S_{r,s}| \le \eps_{n + 1}|Q_{r,s}|.
\end{equation}
Finally, fix a smooth cut-off function $\psi \in C^{\infty}_c (Q_{r,s})$ such that $\psi \equiv 1$ on $S_{r,s}$, with 
\begin{equation}\label{estpsi}
\|D^{\ell}\psi\|_{L^\infty} \le \frac{c}{\diam(Q_{r,s})^\ell}, \quad \forall \ell \ge 0,
\end{equation}
where $c> 0$ is a dimensional constant. We modify $v_n$ on $Q_{r,s}$ by replacing it with
\[
(1-\psi(x))v_n(x) + \psi(x)P_{r,s}(x),
\]
where $P_{r,s}$ is the $k'$-th order Taylor polynomial centred in the center of $Q_{r,s}$. This operation defines $v_{n + 1}$. Now \eqref{indpp21}-\eqref{indpp24} are immediate to check. \eqref{indpp23} follows by construction and \eqref{crs}, noticing that $\Omega_{n + 1}^1 = \Omega_n^1 \cup \bigcup_{r,s} S_{r,s} $. We only need to show \eqref{indpp22}. We check \eqref{indpp22} separately on every $Q_{r,s}$. For all $x \in Q_{r,s}$, we have, for every multi-index $I \in \mathcal{M}(\ell,m)$, $0 \le \ell \le k'$:
\begin{align*}
|\partial_I(v_{n + 1} - v_n)(x)| = |\partial_I((1-\psi(x))v_n(x) + \psi(x)P_{r,s}(x) - v_n(x))| = |\partial_{I}(\psi(x)(P_{r,s}(x) - v_n(x)))|.
\end{align*}
With a triangle inequality, it is easy to see that the latter can be estimated with a sum of terms of the form
\[
|\partial_{I'}\psi(x)\partial_{I''}(P_{r,s}(x) - v_n(x)))|,
\]
with $I' \in \mathcal{M}(\ell',m)$ and $I'' \in \mathcal{M}(\ell - \ell',m)$. Now $\eqref{estpsi}$, $\eqref{contv}$ and the choice of $P_{r,s}$ yield
\[
|\partial_{I'}\psi(x)\partial_{I''}(P_{r,s}(x) - v_n(x)))| \le c\gamma\eps_{n + 1}
\]
and hence conclude the proof, provided we choose $\gamma$ sufficiently small depending only on $k'$, $m$ and $c$.
\end{proof}

The basic laminate construction as the one of Lemma \ref{lam} has already appeared in the literature in various contexts and for various operators, see for instance \cite[Proposition 3.2]{LINF} and \cite[Lemma 3.3]{COR}. We will now refine it by showing that the map $V_\alpha$ can be chosen to take values in $B_\alpha(a) \cup B_{\alpha}(b)$ instead of $B_\alpha([a,b])$. Closely related results appeared in \cite[Proposition 3.3-3.4]{KIRK} and \cite[Lemma 2.1]{AFSZ}, when studying laminations for the $\curl$ operator in the space of symmetric matrices. 

\begin{prop}\label{lam+}
Let $\Omega\subset \R^m$ be an open and bounded set. Let $a,b \in \R^n$, $b-a  = c \in \Lambda_{\A}$ and $e = \lambda a + (1-\lambda) b$, for some $\lambda \in (0,1)$. Fix any element $q_e \in \POL(k',m)^{n'}$ with the property that $\mathcal{B}(q_e) = e$ everywhere in $\R^m$. Then, for all $\beta > 0$, there exists a map $V_\beta \in W^{k',\infty} \cap C^{k' -1}(\overline{\Omega},\R^{n'})$ such that
\begin{enumerate}
\item\label{f02} the $W^{k',\infty} \cap C^{k' -1}$ norm of $V_\beta$ only depends on $\diam(\Omega),|a|,|b|$ and $|D^{k'}q_e|$;
\item\label{f2} $V_\beta = q_e$, together with all its derivatives of order $\ell < k'$, on $\partial\Omega$;
\item\label{ff2} every component of $V_\beta$ is piecewise a polynomial of degree $k'$;
\item\label{fff2} $\|V_\beta -q_e\|_{C^{k' - 1}(\overline{\Omega})} \le \beta$;
\item\label{ss2} if $v_{\beta}(x) \doteq \mathcal{B}(V_\beta)$, $|\{x \in \Omega: v_\beta(x) \in B_\beta(a)\}| = \lambda|\Omega|$ and $|\{x \in \Omega: v_\beta \in B_\beta(b)\}| = (1-\lambda)|\Omega|$.
\end{enumerate}
\end{prop}

\begin{proof}
Fix $0 < \beta \le \frac{1}{2}|a-b|$ and $0 <\sigma < \min\left\{\frac{\beta}{2|a-b|},\beta\right\}$. We inductively construct a sequence of maps $\{V_n\}_n$ that in the limit will give us a map $V_{\beta,\sigma}$. Let $v_{\beta,\sigma}\doteq \mathcal{B}(V_{\beta,\sigma})$. $V_{\beta,\sigma}$ will have all the required properties, except for \eqref{ss2} that will be replaced by:
\begin{equation}\label{all}
|\Omega| = |\{x \in \Omega: v_{\beta,\sigma} \in B_\beta(a)\}\cup \{x \in \Omega: v_{\beta,\sigma} \in B_\beta(b)\}|
\end{equation}
and
\begin{equation}\label{almost}
|\{x \in \Omega: v_{\beta,\sigma} \in B_\beta(a)\}| \ge (1-\sigma)\lambda|\Omega| \text{ and } |\{x \in \Omega: v_{\beta,\sigma} \in B_\beta(b)\}| \ge (1-\sigma)(1-\lambda)|\Omega|.
\end{equation}
We will deal with \eqref{ss2} in a second moment. 
\\
\\
\;\fbox{Step 1: the inductive setup:}
\\
\\
At step $0$, we choose $V_0 = V_{\alpha}$ for $\alpha = \frac{\sigma}{2} < \frac{\beta}{2}$  and $\Omega_0 \doteq \Omega_2^\alpha$ as in Lemma \ref{lam}. By Lemma \ref{lam}, $\Omega_0$ is essentially open in $\Omega$. Define $\eps_{n} \doteq \frac{\sigma}{2^{n + 2}}$. Suppose we are given a map $V_n \in W^{k',\infty}$ whose components are piecewise polynomials of degree $k'$ that satisfies the following properties
\begin{enumerate}[	$(a)$]
\item\label{ind10} $V_n = q_e$ together with all of its derivatives of order $\ell < k'$ on $\partial\Omega$;
\item\label{ind16} let $v_n \doteq \mathcal{B}(V_n)$. There exists $\Omega_n$, essentially open in $\Omega$, with $|\Omega_n| \le \eps_n|\Omega|$ and such that $$\Omega_n \supseteq \{x: v_{n} \notin B_{\sum_{j}^{n}\eps_j}(a)\cup B_{\sum_{j}^{n}\eps_j}(b)\};$$
\item\label{ind18} $v_{n}(x) \in B_{\sum_{j}^{n}\eps_j}([a,b])$.
\end{enumerate}
We claim it is possible to find a new map $V_{n + 1}\in W^{k',\infty}\cap C^{k'-1}$ whose components are piecewise polynomials of degree $k'$ and with $\|V_{n + 1}\|_{W^{k',\infty}\cap C^{k'-1}} \le \max\{\|V_n\|_{W^{k',\infty}\cap C^{k'-1}},L\}$, where $L$ only depends on $|a|,|b|$ and $|D^{(k)}q_e|$, and fulfilling the following properties:
\begin{enumerate}[	$(A)$]
\item\label{ind20} $V_{n+1} = q_e$, together with all of its derivatives of order $\ell < k'$, on $\partial\Omega$;
\item\label{ind26} let $v_{n + 1} \doteq \mathcal{B}(V_{n + 1})$. There exists $\Omega_{n +1}$, essentially open in $\Omega$, with $|\Omega_{n +1}| \le \eps_{n + 1}|\Omega|$ and  such that $$\Omega_{n+1} \supseteq \{x: v_{n+1} \notin B_{\sum_{j}^{n+1}\eps_j}(a)\cup B_{\sum_{j}^{n+1}\eps_j}(b)\};$$
\item\label{ind28} $v_{n+1}(x) \in B_{\sum_{j}^{n+1}\eps_j}([a,b])$.
\item\label{ind22} $V_{n + 1} = V_n$ on $\Omega_n^c$;
\item\label{ind25} $\|V_{n + 1} - V_n\|_{C^{k' -1}} \le \eps_{n + 1}$;
\end{enumerate}

Suppose for a moment the claim holds. First, Lemma \ref{lam} tells us that $V_0$ satisfies \eqref{ind10}-\eqref{ind16}-\eqref{ind18} for $n = 0$. By \eqref{ind25}, we can define the $C^{k'-1}$ limit
\[
V_{\beta,\sigma} = \lim_n V_n.
\]
Moreover, we have that $\|v_n\|_{L^\infty}$ is equibounded and, by the strong convergence of $V_n$ in $L^\infty$, we infer the weak-$*$ convergence in $L^\infty$ of $v_n$ to $v_{\beta,\sigma} = \mathcal{B}(V_{\beta,\sigma})$. Since $V_n$ and $V_{n + 1}$ differ only on $\Omega_n$ and $|\Omega_n| \to 0$, we see that $V_n$ and $v_n$ converge in measure to $V_{\beta,\sigma}$ and $v_{\beta,\sigma}$, respectively. Now it is easy to deduce from the properties of $V_n$ and $V_{n + 1}$ that $v_{\beta,\sigma}$ and $V_{\beta,\sigma}$ enjoys properties \eqref{f02}-\eqref{f2}-\eqref{ff2}-\eqref{fff2} listed in the statement of the proposition together with \eqref{all}-\eqref{almost}. We now prove the inductive step.
\\
\\
\;\fbox{Step 2: the inductive step.}
\\
\\
Suppose we are given $V_n$ and $\Omega_n$ as above. Split $\Omega_n = \bigcup_q \Omega'_q$, with $\Omega'_q$ open, in such a way that on $\Omega'_q$, every component of $V_n$ is a polynomial of order $k'$. We modify $V_n$ on $\Omega'_q$ in the following way. By \eqref{ind18}, we know that
\[
v_{n}(x) \in B_{\sum_{j}^{n}\eps_j}([a,b]),\quad \forall x \in \Omega,
\]
but from the definition of $\Omega_n$ we also know that
\begin{equation}\label{nonballs}
v_n(x) \in B_{\sum_j^n\eps_j}([a,b])\setminus B_{\sum_{j}^n\eps_j}(a)\cup B_{\sum_{j}^n\eps_j}(b), \quad\forall x \in \Omega'_q.
\end{equation}
Observe that $v_n(x)$ is constant on $\Omega'_q$, since $V_n$ is a vector of polynomials of order $k'$ there. We will then call $e'_q \doteq v_n(x)$. We infer from \eqref{nonballs} that there exists $h_q$ with $|h_q| < \sum_{j = 1}^{n}\eps_j$ and $\mu_q \in (0,1)$ such that
\[
e'_q = h_q + \mu_q a + (1-\mu_q)b = \mu_q (a + h_q) + (1-\mu_q)(b + h_q).
\]
We use Lemma \ref{lam} with $a + h_q, b+ h_q,e'_q,\mu_q, P_q$ instead of $a,b,e,\lambda, q_e$, where $P_q$ is the unique element of $\POL(k',m)^{n'}$ that extends $V_n|_{\Omega_q'}$, to find a map $V_{\rho,q}$ with the properties listed in the statement of Lemma \ref{lam}, for any $0 < \rho < \eps_{n + 1}$. We then replace $V_n$ on $\Omega'_q$ by $V_{\rho,q}$. Call $V_{n + 1}$ the map that coincides with $V_n$ outside of $\Omega_n$ and is defined as $V_{\rho,q}$ in $\Omega'_q$. Notice that we can check the inductive step separately on each subdomain $\Omega'_q$. The fact that 
\[
\|V_{n + 1}\|_{W^{k',\infty}} \le \max\{\|V_n\|_{W^{k',\infty}},L\}
\]
stems from the definition of $V_{n + 1}$ and property \eqref{f0} of $V_{\rho,q}$ stated in Lemma \ref{lam}. Furthermore, \eqref{ind20}-\eqref{ind22} are immediate by construction and \eqref{f1} of Lemma \ref{lam}. \eqref{ind25} is a consequence of the choice $\rho < \eps_{n + 1}$ and \eqref{fff1} of Lemma \ref{lam}. Using \eqref{ss1} of Lemma \ref{lam} and the estimates $\rho < \eps_{n+ 1}$ and $|h_q| < \sum_{j = 1}^n\eps_j$, \eqref{ind28} also follows. Finally, exploiting again the estimates on $|h_q|$ and $\rho$, we also have \eqref{ind26}, by \eqref{t1}-\eqref{ff1} of Lemma \ref{lam}. This concludes the proof of the inductive step.
\\
\\
\fbox{Step 3: proof of \eqref{ss2}.}
\\
\\
This step is analogous to the same step of Lemma \cite[Lemma 2.1]{AFSZ} and we repeat it for the convenience of the reader. Up to now, we have found a map $V_{\beta,\sigma}$ with properties \eqref{f02}-\eqref{f2}-\eqref{ff2}-\eqref{fff2} of the statement of the proposition and with \eqref{ss2} replaced by \eqref{all}-\eqref{almost}, namely:
\[
|\Omega| = |\{x \in \Omega: v_{\beta,\sigma}(x) \in B_\beta(a)\}\cup\{x \in \Omega: v_{\beta,\sigma}(x) \in B_\beta(b)\}|,
\]
and
\[
|\{x \in \Omega: v_{\beta,\sigma} \in B_\beta(a)\}| \ge (1-\sigma)\lambda|\Omega| \text{ and } |\{x \in \Omega: v_{\beta,\sigma} \in B_\beta(b)\}| \ge (1-\sigma)(1-\lambda)|\Omega|.
\]
Since the inductive statement worked for any domain $\Omega$, we now work on a cube\footnote{In fact, any open set $Q$ with $|\partial Q| =0$ would serve for our purpose.} $Q \subset \R^m$ instead of $\Omega$, and we come back to the general bounded open set $\Omega$ of the statement of the proposition later on. We can suppose, without loss of generality that
\[
\lambda|Q| > |\{x \in Q: v_{\beta,\sigma} \in B_\beta(a)\}| \ge (1-\sigma)\lambda|Q|.
\]
Now choose any $s$ such that $ \sigma < s < \min\left\{\frac{\beta}{2|a-b|},1-\lambda\right\}$ and set
\[
a' \doteq a + s(b-a).
\]
Let $\mu = \frac{\lambda}{1 - s} > \lambda$ and write
\[
e = \mu a' + (1-\mu)b.
\]
Since $s < 1 -\lambda$, $\mu \in (0,1)$. We can repeat the previous steps of the proof with $a',b,\lambda$ and $q_e$ in place of $a,b,\mu$ and $q_e$, to obtain a map $V'_{\beta,\sigma}$ with properties \eqref{f02}-\eqref{f2}-\eqref{ff2}-\eqref{fff2} of the statement of the Proposition and with \eqref{ss2} replaced by
\begin{equation}\label{all1}
|Q| = |\{x \in Q: v'_{\beta,\sigma}(x) \in B_\frac{\beta}{2}(a')\}\cup\{x \in Q: v_{\beta,\sigma}(x) \in B_\frac{\beta}{2}(b)\}|,
\end{equation}
and
\begin{equation}\label{almost1}
|\{x \in Q: v'_{\beta,\sigma}(x) \in B_\frac{\beta}{2}(a')\}| \ge (1-\sigma)\mu|Q| \text{ and } |\{x \in Q: v'_{\beta,\sigma}(x) \in B_\frac{\beta}{2}(b)\}| \ge (1-\sigma)(1-\mu)|Q|.
\end{equation}
Here, as usual, $v'_{\beta,\sigma} = B(V'_{\beta,\sigma})$. Since $s < \frac{\beta}{2|a-b|}$, we see that
\[
B_\frac{\beta}{2}(a') \subset B_{\beta}(a),
\]
and hence $\eqref{almost1}$ implies
\begin{equation}\label{almost2}
|\{x \in Q: v'_{\beta,\sigma}(x) \in B_\beta(a)\}| \ge (1-\sigma)\mu|Q| \text{ and } |\{x \in Q: v'_{\beta,\sigma}(x) \in B_\beta(b)\}| \ge (1-\sigma)(1-\mu)|Q|.
\end{equation}
We now come back to the domain $\Omega$ of the statement of the proposition. We split $\Omega$ into two open sets $\Omega_1$ and $\Omega_2$ with $|\Omega_1| = t|\Omega|$, $|\Omega_2| =(1-t)|\Omega|$, $t \in (0,1)$ to be fixed. We subdivide $\Omega_1$ in cubes and fill it with rescaled and translated copies of $V_{\beta,\sigma}$ of the form
\[
V_{\beta,\sigma,r,x_0}(x) \doteq r^{k'}V_{\beta,\sigma}\left(\frac{x- x_0}{r}\right),
\]
and $\Omega_2$ with rescaled and translated copies of $V'_{\beta,\sigma}$ of the same form. The map $V_\beta$ is exactly given by the resulting map, for the correct choice of $t$. Indeed, it is simple to see that $V_\beta$ inherits properties \eqref{f02}-\eqref{f2}-\eqref{ff2}-\eqref{fff2} of the Lemma, and also \eqref{all}-\eqref{all1}, in the sense that
\[
|\Omega| = |\{x \in \Omega: v_{\beta}(x) \in B_\beta(a)\}\cup\{x \in \Omega: v_{\beta}(x) \in B_\beta(b)\}|.
\]
Notice that, by our choice $\beta < \frac{1}{2}|a-b|$, the sets $\{x \in \Omega: v_{\beta}(x) \in B_\beta(a)\}$ and $\{x \in \Omega: v_{\beta}(x) \in B_\beta(b)\}$ are disjoint, thus it suffices to check that there exists $t \in (0,1)$ such that
\[
|\{x \in \Omega: v_{\beta}(x) \in B_\beta(a)\}| = \lambda|\Omega|
\]
to conclude the proof. To see the latter, we write
\begin{align*}
|\{x \in \Omega: v_\beta(x) \in B_\beta(a)\}| &= |\{x \in \Omega_1: v_{\beta}(x) \in B_\beta(a)\}| + |\{x \in \Omega_2: v_\beta(x) \in B_\beta(a)\}|\\
& = |\{x \in Q: v_{\beta,\sigma}(x) \in B_\beta(a)\}|\frac{|\Omega_1|}{|Q|} + |\{x \in Q: v_{\beta,\sigma}'(x) \in B_\beta(a)\}|\frac{|\Omega_2|}{|Q|}\\
& = t|\{x \in Q: v_{\beta,\sigma}(x) \in B_\beta(a)\}|\frac{|\Omega|}{|Q|} + (1-t)|\{x \in Q: v_{\beta,\sigma}' \in B_\beta(a)\}|\frac{|\Omega|}{|Q|}.
\end{align*}
Since $\sigma < s$, $\mu = \frac{\lambda}{1-s}$,
\[
|\{x \in Q: v_{\beta,\sigma}(x) \in B_\beta(a)\}| < \lambda|Q|\text{ and }|\{x \in Q: v'_{\beta,\sigma}(a) \in B_\beta(a)\}| \ge (1-\sigma)\mu|Q| > \lambda |Q|,
\]
it is then clear that there exists $t \in (0,1)$ such that
\[
t|\{x \in Q: v_{\beta,\sigma}(x) \in B_\beta(a)\}| + (1-t)|\{x \in Q: v'_{\beta,\sigma}(a) \in B_\beta(a)\}| = \lambda|Q|.
\]
This choice of $t$ fixes $V_\beta$ and concludes the proof.
\end{proof}

It is convenient to introduce some measure theoretic concept alongside with the simple laminates construction, compare \cite[Section 2]{SMVS}, \cite[Introduction]{KIRK}. For instance, given $a,b$ as in Lemma \ref{lam}, we consider\footnote{The measure we associate is the so-called \emph{Young measure} generated by the sequence of maps defined in Lemma \ref{lam}. We will only use particular Young measures, namely laminates, and hence we will not introduce them in full generality. For a comprehensive introduction, see for instance \cite[Chapter 3]{DMU}.}
\[
\nu = \lambda\delta_{a} + (1-\lambda)\delta_b.
\]
Now, after having split the barycentre $e$ into $a$ and $b$ as $e = \lambda a + (1-\lambda)b$, one may split $b$ as $b = \mu A + (1-\mu)B$, for $\mu \in (0,1)$ and $B - A \in \Lambda_\mathcal{A}$. After this operation, we consider the new measure
\[
\nu' = \lambda\delta_{a} + (1-\lambda)\mu\delta_A + (1-\lambda)(1-\mu)\delta_B.
\]
Notice that the barycentre of $\nu'$ is the same as the one of $\nu$. Generalizing this simple example, we give the following:

\begin{definition} \label{d:laminates_finite}
Let $\nu,\nu' \in \mathcal{P}(U)$, $U \subset \R^{n}$ open. Let $\nu = \sum_{i= 1}^r\lambda_i\delta_{a_i}$. We say that $\nu'$ can be obtained via \emph{elementary splitting from }$\nu$ if for some $i \in \{1,\dots,r\}$, there exist $b,c \in U$, $\lambda \in [0,1]$ such that
\[
b-c \in \Lambda_\A ,\quad [b,c] \subset U, \quad a_i = sb + (1-s)c,
\]
for some $s \in (0,1)$ and
\[
\nu' = \nu +\lambda\lambda_i(-\delta_{a_i} + s\delta_b + (1-s)\delta_c).
\]
A measure $\nu = \sum_{i= 1}^r\lambda_i\delta_{a_i}\in \mathcal{P}(U)$ is called an $\A$-\emph{laminate of finite order} if there exists a finite number of measures $\nu_1,\dots,\nu_{r'} \in \mathcal{P}(U)$ such that
\[
\nu_1 = \delta_X,\quad \nu_{r'} = \nu
\]
and $\nu_{j + 1}$ can be obtained via elementary splitting from $\nu_j$, for every $j\in \{1,\dots,N-1\}$.
\end{definition}

Using the definition of $\mathcal{A}$-laminate of finite order and a simple iterative procedure that exploits Proposition \ref{lam+} at every splitting, one may prove the following result. We refer the interested reader to \cite[Lemma 3.2]{SMVS} for a proof in the case $\mathcal{A} = \curl$.

\begin{prop}\label{ind}
Let $\nu = \sum_{i = 1}^r\lambda_i\delta_{a_i} \in \mathcal{P}(U)$ be an $\mathcal{A}$-laminate of finite order, and let $e = \bar \nu$. Fix any element $q_e \in \POL(k',m)^{n'}$ with the property that $\mathcal{B}(q_e) = e$ everywhere in $\R^m$. Then, given an open set $\Omega$, for every $\eps >0$ there exists $V_\eps \in  W^{k',\infty} \cap C^{k' -1}(\overline{\Omega},\R^{n'})$ enjoying the following properties:

\begin{enumerate}
\item\label{flof0} the $W^{k',\infty} \cap C^{k' -1}$ norm of $V_\eps$ only depends on $\diam(\Omega),\max_i|a_i|$ and $|D^{k'}q_e|$;
\item\label{flof} $V_\eps = q_e$, together with all its derivatives of order $\ell < k'$, on $\partial\Omega$;
\item\label{fflof} Every component of $V_\eps$ is piecewise a polynomial of degree $k'$;
\item\label{ffflof} $\|V_\eps -q_e\|_{C^{k' - 1}(\overline{\Omega})} \le \eps$;
\item\label{sslof} if $v_{\eps}(x) \doteq \mathcal{B}(V_\eps)(x)$, then $|\{x \in \Omega: v_{\eps}(x) \in B_{\eps}(a_i)\}| = \lambda_i|\Omega|,\forall i \in \{1,\dots, r\}$.
\end{enumerate}
\end{prop}

\subsection{Laminates} \label{section:laminates}
In this section we give the definition of $\A$-\emph{laminate}. In \cite[Section 4]{KIRK}, Kirchheim develops all the useful tools concerning $\A$-laminates, thus extending \cite[Section 2]{SMVS} from the case $\mathcal{A} = \curl$ to the case of general linear differential operators. In this subsection, we simply recall the definitions and the results of \cite{KIRK}. Let us point out that in \cite{KIRK} the notation $\mathcal{D}$ is used instead of $\Lambda_\A$ and the name $\mathcal{D}$-\emph{prelaminates} is used instead of $\mathcal{A}$-\emph{laminates of finite order}.

\begin{definition}
Let $O \subset \R^{ n}$ be an open set. We say that $f: O \to \R$ is $\Lambda_\A$-convex in $O$ if $f$ is convex on every $\Lambda_\A$ segment contained in $O$, i.e.
$$ f(\lambda a + (1- \lambda)b ) \leq \lambda f(a) + (1- \lambda) f(b),$$
for any $a,b \in \R^{n}$ such that $a-b \in \Lambda_\A$. If $f$ is $\Lambda_\A$-convex in $\R^n$, we will simply say that $f$ is $\Lambda_\A$-convex.
\end{definition}

\begin{definition}
 Let $E \subset \R^{n}$. We say that $\nu \in \mathcal{P}(E)$ is an $\A$-laminate if 
\begin{equation} \label{jensen}
\int_{\R^{n\times m}}f(X)d\nu \ge f\left(\int_{\R^{n}}Xd\nu\right) = f(\bar \nu),
\end{equation}
for every $\Lambda_\A$-convex function $f$ in $\R^{n}$. We define $$ \mathcal{P}^{ \Lambda_\A}(K)\doteq \{\nu \in \mathcal{P}(K): \nu \text{ is an } \A\text{-laminate} \}.$$
\end{definition}

We give now the definition of $\Lambda_\A$-convex hull of a compact or open subset of $\R^n$. In the case $\mathcal{A} = \curl$, this is the so called rank-one convex hull, $E^{rc}$, compare \cite[Section 6]{SMVS}.

\begin{definition}
Let $K \subset \R^n$ be a compact set. We define the $\Lambda_\A$ convex hull $K^{\Lambda_\A}$ as the set of 
\[
K^{\Lambda_\A} \doteq \{X: X \text{ is the barycenter of a $\A$-laminate $\nu$ in $K$}\},
\]
For an open set $U$,
\[
U^{\Lambda_\A} \doteq \bigcup_{K\subset U: K \text{ compact}}K^{\Lambda_\A}.
\]
\end{definition}
We collect in the next proposition some useful properties of the objects we just introduced:

\begin{prop} \label{p:lambdawavecone}
The following hold:
\begin{enumerate}
\item\label{firstprop} For any compact set $K \subset \R^n$, $$K^{\Lambda_\A} = \{X: f(X) \leq 0, \text{ for every } \ \Lambda_\A \text{-convex $f$ with } \max_{Y\in K}f(Y)\le 0\};$$
\item\label{secondprop} If $U \subset \R^n$ is open, then $U^{\Lambda_\A}$ is open;
\item\label{thirdprop} Let $O \subset \R^n$ be an open and bounded, and let $f:O\to\R$ be $\Lambda_\A$-convex. Then $f$ is locally Lipschitz.
\end{enumerate}
\end{prop}

For the proof of  \eqref{firstprop} we refer the reader to \cite[Corollary 4.11]{KIRK}. \eqref{secondprop} follows from the simple fact that the translation of a laminate is still a laminate. Finally, the proof of \eqref{thirdprop} can be found in \cite[Lemma 2.3]{KKR}.
\\
\\
Notice that if $\nu$ is a $\A$-\emph{laminate of finite order}, then \eqref{jensen} holds for every $\Lambda_\A$-convex function $f$. Since every $\Lambda_\A$-convex function is locally Lipschitz continuous, \eqref{jensen} also holds for every weak-$*$ limit of sequences $\{\nu_n\}_n$ of $\mathcal{A}$-laminates of finite order supported in a fixed bounded open set. Therefore, the weak-$*$ closure of the space of $\mathcal{A}$-laminates of finite order is contained in the space of $\mathcal{A}$-laminates. M\"uller and \v Sver\'ak actually managed to prove the converse in the case of the wave cone induced by the operator $\A =\curl$, compare \cite[Theorem 2.1]{SMVS}. \cite[Theorem 4.12]{KIRK} extends this result to the case of general operators:

\begin{theorem}\label{ann}
Let $K \subset \R^n$ be a compact set and let $\nu \in \mathcal{P}^{\Lambda_\A}(K)$. Let $U$ be an open set such that $K^{\Lambda_\A} \subset U$. Then there exists a sequence $\{\nu_j\}_j \subset \mathcal{P}(U)$ of laminates of finite order  such that $\overline{\nu_j} = \overline{\nu}$ for each $j$ and $\{\nu_j\}_j$ converges weakly-$*$ to $\nu$ in the sense of measures.
\end{theorem}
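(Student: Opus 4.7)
The plan is to follow the Hahn--Banach scheme of M\"uller and \v Sver\'ak, adapted to general wave cones. Write
\[
\mathcal{L}(U,X) \doteq \{\mu \in \mathcal{P}(U): \mu \text{ is an }\A\text{-laminate of finite order with } \bar\mu = X\}.
\]
Two structural properties drive the argument. First, $\mathcal{L}(U,X)$ is convex: given $\mu_1,\mu_2 \in \mathcal{L}(U,X)$ and $t \in (0,1)$, one realizes $t\mu_1 + (1-t)\mu_2$ by running the splitting tree of $\mu_1$ starting from $\delta_X$ but applying splitting fraction $\lambda = t$ in the very first elementary splitting of $X$ (allowed by Definition \ref{d:laminates_finite}), leaving an untouched atom of mass $1-t$ at $X$, and then executing the splitting tree of $\mu_2$ on this residual atom. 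Second, finite-order laminates compose: if $\mu = \sum_i \lambda_i \delta_{a_i} \in \mathcal{L}(U,X)$ and $\mu^i \in \mathcal{L}(U,a_i)$ for each $i$, then $\sum_i \lambda_i \mu^i \in \mathcal{L}(U,X)$, by concatenation of splitting sequences.

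Suppose for contradiction $\nu \notin \overline{\mathcal{L}(U,\bar\nu)}^{w^*}$. By the convexity just established and Hahn--Banach separation in the locally convex space $(C_c(U))^*$, there exist $\varphi \in C_c(U)$ and $c \in \R$ with
\[
\int_U \varphi\, d\mu \le c < \int_U \varphi\, d\nu, \qquad \forall \mu \in \mathcal{L}(U,\bar\nu).
\]
I would then introduce the upper envelope $g: U \to \R$ defined by $g(X) \doteq \sup_{\mu \in \mathcal{L}(U,X)} \int_U \varphi\, d\mu$. Three properties follow at once: $g \ge \varphi$ on $U$ (take $\mu = \delta_X$), $g(\bar\nu) \le c$ by the choice of $\varphi$, and $g$ is $\Lambda_\A$-\emph{concave} on $U$. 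The last assertion uses composition: for $a,b \in U$ with $b-a \in \Lambda_\A$ and $[a,b] \subset U$, any laminates $\mu^a \in \mathcal{L}(U,a)$ and $\mu^b \in \mathcal{L}(U,b)$ combine through one further elementary splitting of $\lambda a + (1-\lambda)b$ into $\lambda\mu^a + (1-\lambda)\mu^b \in \mathcal{L}(U,\lambda a + (1-\lambda) b)$, from which $g(\lambda a + (1-\lambda)b) \geq \lambda g(a) + (1-\lambda) g(b)$.

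To conclude I would feed $f \doteq -g$, which is $\Lambda_\A$-convex on $U$, into the defining Jensen inequality \eqref{jensen} for the $\A$-laminate $\nu$. Since $\supp\nu \subset K \subset K^{\Lambda_\A} \subset U$ and $g$ is locally Lipschitz on $U$ by Proposition \ref{p:lambdawavecone}(3), one extends $f$ to a $\Lambda_\A$-convex function $\tilde f$ on $\R^n$ agreeing with $f$ on a neighborhood of $\supp\nu$. Plugging $\tilde f$ into \eqref{jensen} gives $\int \tilde f\, d\nu \ge \tilde f(\bar\nu)$, i.e.\ $\int g\, d\nu \le g(\bar\nu) \le c$. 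Combined with $g \ge \varphi$, this yields $\int \varphi\, d\nu \le c$, contradicting the strict Hahn--Banach inequality.

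The step I expect to be the main obstacle is precisely the global extension of $-g$ from $U$ to a $\Lambda_\A$-convex function on $\R^n$: naively cutting off a $\Lambda_\A$-convex function outside a compact set typically destroys the defining inequality on $\Lambda_\A$-segments crossing the support boundary. This is exactly where the hypothesis $K^{\Lambda_\A} \subset U$ (rather than only $K \subset U$) becomes essential: one can insert an intermediate relatively compact open set $K^{\Lambda_\A} \subset U' \Subset U$ on which $g$ is uniformly Lipschitz, and carry out a controlled extension to produce $\tilde f$ coinciding with $f$ on $K^{\Lambda_\A}$ while remaining $\Lambda_\A$-convex on $\R^n$. An essentially equivalent alternative, which I would probably prefer, is to first prove the standard strengthening that \eqref{jensen} holds for every function $\Lambda_\A$-convex only on an open neighborhood of $\supp\nu$, a technical lemma established in \cite[Section 4]{KIRK} and completely avoiding the global extension issue.
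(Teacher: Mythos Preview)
The paper does not prove Theorem \ref{ann} at all: it is quoted verbatim from \cite[Theorem 4.12]{KIRK}. So there is no in-paper argument to compare against, and your sketch is in fact more detailed than anything the paper supplies.

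Your outline is the standard M\"uller--\v Sver\'ak/Kirchheim Hahn--Banach scheme and is correct in its architecture: convexity of $\mathcal{L}(U,X)$, separation by some $\varphi\in C_c(U)$, the envelope $g(X)=\sup_{\mu\in\mathcal{L}(U,X)}\int\varphi\,d\mu$, and the observation that $-g$ is $\Lambda_\A$-convex on $U$. You are also right that the whole difficulty concentrates in a single step, namely feeding the \emph{locally} $\Lambda_\A$-convex function $-g$ into the Jensen inequality \eqref{jensen}, which in the paper's definition is stated only for globally $\Lambda_\A$-convex test functions.

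Of the two resolutions you propose, option (a) is not on solid ground as written. There is no general extension theorem taking a bounded, locally Lipschitz $\Lambda_\A$-convex function on an open set $U'$ to a $\Lambda_\A$-convex function on all of $\R^n$; already for ordinary convexity this fails unless the domain is convex, and for separate convexity or rank-one convexity the known extension/truncation devices are delicate and cone-specific. The phrase ``controlled extension'' does not amount to a proof, and the hypothesis $K^{\Lambda_\A}\subset U$ alone does not supply one.

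Option (b) is the correct route and is precisely what Kirchheim establishes in \cite[Section 4]{KIRK}: for a balanced cone, Jensen's inequality \eqref{jensen} automatically holds for every $f$ that is $\Lambda_\A$-convex on an open set containing $K^{\Lambda_\A}$ (this is where the hypothesis $K^{\Lambda_\A}\subset U$, rather than merely $K\subset U$, is used). With that lemma in hand, your argument closes immediately. Since the paper itself outsources the entire statement to \cite{KIRK}, deferring this one technical lemma to the same reference is entirely consistent with the paper's level of detail; just be explicit that (a) is heuristic and that (b) is the step you actually rely on.
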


\subsection{In-approximations and exact solutions}\label{inappexsol}

In this subsection we exploit the theory developed in Section \ref{section:simplelaminates} and Section \ref{section:laminates} to construct solutions of \eqref{AINC}, and in particular we prove Theorem \ref{rcexact}. We start with the following preliminary result.

\begin{prop}\label{usefulprop}
Let $U \subset \R^n$ and $\Omega\subset \R^m$ be open and bounded sets and let $W\in W^{k',\infty}\cap C^{k'-1}(\overline{\Omega},\R^{n'})$ be a map whose components are piecewise polynomials of order $k'$ such that $$\mathcal{B} (W)  \in U^{\Lambda_\A} \text{ in } \Omega.$$ Then, for every $\delta > 0$, there exists a map $V_\delta \in W^{k',\infty} \cap C^{k' -1}(\overline{\Omega},\R^{n'})$ whose components are piecewise polynomials of order $k'$ with the following properties:
\begin{enumerate}
\item the $W^{k',\infty} \cap C^{k' -1}$ norm of $V_\delta$ only depends on $\diam(\Omega)$, $\diam(U)$ and $\|W\|_{C^{k'}}$;
\item $V_\delta = W$, together with all of its derivatives of order $\ell < k'$, on $\partial\Omega$;
\item $\|V_\delta - W\|_{C^{k'-1}} \le \delta$;
\item $v_\delta \doteq \mathcal{B}(V_\delta)$, then  $v_\delta \in U$ a.e. in $\Omega$.
\end{enumerate}
\end{prop}
\begin{proof}
By definition, there exist countably many open and disjoint $\Omega_n$ such that $\Omega = \bigcup_{n}\Omega_n$ and, on $\Omega_n$, $W$ is a vector of polynomials of order $k'$. We work on each $\Omega_n$ separately, and hence fix now $n \in \N$.
\\
\\
By definition, since $e \doteq \mathcal{B}(W|_{\Omega_n}) \in U^{\Lambda_\A}$, there exists a compact set $C\subset U$ such that
\[
e \in C^{\Lambda_\A}.
\]
By Proposition \ref{p:lambdawavecone}, we infer the existence of a $\A$-laminate $\nu$ supported in $C$ with barycentre $e$. Therefore, we can apply Theorem \ref{ann} with $U^{\Lambda_\A}$ instead of $U$. This is possible since $U^{\Lambda_\A}$ is open, see \eqref{secondprop} of Proposition \ref{p:lambdawavecone}. Thus, we can find a  $\A$-laminate of finite order $$\mu = \sum_{i = 1}^r \lambda_i\delta_{a_i}$$ supported in $U^{\Lambda_\A}$, and satisfying
\begin{equation}\label{alp}
\mu(U) \ge \frac{1}{2}\nu(U) = \frac{1}{2}
\end{equation}
the latter coming from the lower semi-continuity on open sets of the total variation of probability measure with respect to the weak-$*$ convergence, see \cite[Theorem 1.40(ii)]{EVG}. We apply Proposition \ref{ind} with $\mu$ and with $q_e \in \POL(k',m)$ chosen to be the unique extension to $\R^m$ of the polynomial $W|_{\Omega_n}$. Hence, fixed $\beta >0$, we know that we can find a map $V^n_\beta\in W^{k',\infty} \cap C^{k' -1}(\overline{\Omega},\R^{n'})$ whose components are piecewise polynomials of degree $k'$ such that:
\begin{enumerate}
\item the $W^{k',\infty} \cap C^{k' -1}$ norm of $V_\beta^n$ is bounded by $\diam(\Omega), \diam(U)$ and $|D^{k'}q_e| \le \|W\|_{W^{k',\infty}}$;
\item $\|V^n_\beta - W\|_{C^{k' -1}(\overline{\Omega_n},\R^{n'})} \le \beta$;
\item $V_\beta^n(x) =W$, together with all of its derivatives of order $\ell < k'$, on $\partial\Omega_n$;
\item if $v_\beta^n \doteq \mathcal{B}(V_\beta^n)$, then $|\{x\in\Omega_n: \dist(v_\beta^n,\{a_1,\dots, a_r\}) \ge \beta\}| = 0$
\item \label{eq:measure}$|\{x \in \Omega_n: \dist(v_\beta^n,a_i) \le \beta\}| = \lambda_i|\Omega_n|,\forall i \in \{1,\dots, r\}$.
\end{enumerate}
By \eqref{alp}, we have
\[
\mu(U) = \sum_{i: a_i \in U}\lambda_i \ge \frac{1}{2}.
\]
We then choose $\beta>0$ so that $B_\beta(a_k) \subset U^{\Lambda_\A}$, $\forall k =1,\dots, r$, and if $a_k \in U$, then also $B_\beta(a_k) \subset U$. This is possible since $U$ is open and we have a finite number of $a_k$. Therefore, \eqref{eq:measure} of the previous list tells us that
\begin{equation}\label{inmeas}
|\{x \in \Omega_n: \mathcal{B}(V_\beta^n) \notin U\}| \le \frac{| \Omega_n|}{2} .
\end{equation}
Now we can define $V_1$ on $\Omega$ by setting $V_1 \doteq V_\beta^n$ on $\Omega_n$. $V_1$ is a map with the required regularity and whose components are all piecewise polynomials of order $k'$. Moreover $V_1=W \text{ on } \partial \Omega$, together with all of its derivatives of order $\ell < k'$,
\begin{equation}\label{props}
\| V_1- W \|_{C^{k' - 1}} \leq \beta,\quad |\{x \in \Omega: \mathcal{B}(V_1) \notin U\}| \le \frac{|\Omega|}{2}.
\end{equation}
Now one iterates this reasoning, considering $V_1$ instead of $W$ and $\{x \in \Omega: \mathcal{B}(V_1) \notin U\}$ instead of $\Omega$. After this step, one gets a map $V_2$ with properties similar to the ones of \eqref{props} with the last one replaced by
\[
|\{x \in \Omega: \mathcal{B}(V_2) \notin U\}| \le \frac{|\{x \in \Omega: \mathcal{B}(V_1) \notin U\}|}{2} \le \frac{|\Omega|}{2^2}.
\]
Iterating this reasoning infinitely many times, one gets a sequence of maps $\{V_q\}_q$ that are easily seen to converge to a map $V_\delta$ with the required properties.
\end{proof}

We now give the definition of $\mathcal{A}$-in-approximation.

\begin{definition}\label{INAPP}
We say that $K \subset \R^n$ \emph{admits a} $\mathcal{A}$-in-approximation if there exists a sequence of open and equibounded sets $U_n \subset \R^{n}$ such that
\begin{equation}\label{lc}
U_n \subset U_{n + 1}^{ {\Lambda_\mathcal{A}}},
\end{equation}
and for every sequence $(X_n)_n$ with $X_n \in U_n$, 
\begin{equation}\label{lp}
\{X_n\}_n\text{ can only have limit points in } K.
\end{equation}
In the sequel, we will simply write $U_n \to K$ for a sequence of open sets having property \eqref{lp}.
\end{definition}

We are now ready to use the proposition above and the concept of $\mathcal{A}$-in-approximation to construct exact solutions of $\eqref{AINC}$.

\begin{theorem}\label{rcexact}
Let $\Omega \subset \R^m$ be an open and bounded set and $K \subset \R^{n}$ be a compact set that admits a $\mathcal{A}$-in-approximation $\{U_n\}_n$. Then, for every $W \in C^{k'}( \overline{\Omega}, \R^{n'})$  such that
\[
\mathcal{B}(W) \in U_1 \text{ in }\Omega,
\]
and for every $\eps>0$, there exists a map $V_\eps\in W^{k',\infty}\cap C^{k'-1}(\overline\Omega,\R^{n'})$ such that:
\begin{enumerate}
\item\label{BOUND}  the $W^{k',\infty}\cap C^{k'-1}$ norm only depends on $\max_n\{\diam(U_n),\diam(K)\}$;
\item\label{BOUNDARY} $V_\eps  = W$,  together with all its derivatives of order $\ell < k'$, on $\partial\Omega$;
\item\label{unif}$\|V_\eps - W\|_{C^{k' -1}(\overline\Omega,\R^{n'})} \leq \eps$;
\item\label{EXA} $\mathcal{B}(V_\eps) \in K, \text{ in }\Omega$.
\end{enumerate}
\end{theorem}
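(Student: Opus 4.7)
The plan is to iterate Proposition~\ref{usefulprop} along the $\mathcal{A}$-in-approximation $\{U_n\}_n$ to produce a Cauchy sequence $(W_n)_n$ of piecewise polynomial maps of order $k'$, converging in $C^{k'-1}$ to the desired $V_\eps$. First, since $W \in C^{k'}(\overline{\Omega},\R^{n'})$ and $U_1$ is open, Lemma~\ref{pp} provides a piecewise polynomial $W_0$ of order $k'$ that agrees with $W$, together with all its derivatives of order $\ell < k'$, on $\partial\Omega$ and satisfies $\|W - W_0\|_{C^{k'}}$ arbitrarily small; continuity of $\mathcal{B}$ then ensures $\mathcal{B}(W_0) \in U_1$ a.e.

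Next, I would fix a summable sequence $\delta_n > 0$ with $\sum_{n \ge 0}\delta_n + \|W_0 - W\|_{C^{k'-1}} \le \eps$, and inductively apply Proposition~\ref{usefulprop} with $U = U_{n+1}$ and tolerance $\delta_n$ (permitted since $\mathcal{B}(W_n) \in U_n \subset U_{n+1}^{\Lambda_\mathcal{A}}$) to obtain $W_{n+1}$ piecewise polynomial of order $k'$, coinciding with $W_n$ together with derivatives of order $\ell < k'$ on $\partial\Omega$, satisfying $\|W_{n+1} - W_n\|_{C^{k'-1}} \le \delta_n$ and $\mathcal{B}(W_{n+1}) \in U_{n+1}$ a.e., with $W^{k',\infty}\cap C^{k'-1}$ norm uniformly bounded by a constant $M$ depending only on $\max_n \mathrm{diam}(U_n)$, $\mathrm{diam}(K)$ and $\|W\|_{C^{k'}}$ (the proposition's bound depends on $\mathrm{diam}(\Omega)$, $\mathrm{diam}(U_{n+1})$ and $\|W_n\|_{C^{k'}}$, which is controlled uniformly by induction thanks to summability of $\delta_n$). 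The summability of $\delta_n$ yields a $C^{k'-1}$ limit $V_\eps$ satisfying conclusions \eqref{BOUND}, \eqref{BOUNDARY} and \eqref{unif}, with $V_\eps \in W^{k',\infty}$ by lower semicontinuity of the norm under $C^{k'-1}$ convergence.

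The main obstacle is conclusion \eqref{EXA}. Strong $C^{k'-1}$ convergence together with the uniform $W^{k',\infty}$ bound only yields weak-$*$ convergence $\mathcal{B}(W_n) \weak \mathcal{B}(V_\eps)$ in $L^\infty$; although equiboundedness of $\{U_n\}_n$ and $U_n \to K$ force $\mathrm{dist}(\mathcal{B}(W_n), K) \to 0$ uniformly, weak-$*$ limits alone only constrain $\mathcal{B}(V_\eps)$ to lie in the closed convex hull of $K$, which is generically strictly larger than $K$. To upgrade to pointwise containment, I would refine the iteration so that the modification sets $S_n \doteq \{x \in \Omega : \mathcal{B}(W_{n+1})(x) \ne \mathcal{B}(W_n)(x)\}$ satisfy $\sum_n |S_n| < \infty$. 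The Borel--Cantelli lemma then makes $\mathcal{B}(W_n)(x)$ eventually constant for a.e.\ $x$; uniqueness of the weak-$*$ limit identifies this pointwise limit with $\mathcal{B}(V_\eps)$ almost everywhere, and the in-approximation property places it in $K$. This summability is not supplied directly by Proposition~\ref{usefulprop}; the fix is to apply the proposition only on those pieces of $W_n$ where $\mathcal{B}(W_n) \notin U_{n+1}$ (using the trivial laminate on the remaining pieces), and to choose the $\delta_n$ in concert with the geometry of $U_n \setminus U_{n+1}$ so that the cumulative modification measure converges.
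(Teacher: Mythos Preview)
Your iteration and the identification of the obstacle are correct, but the proposed fix for conclusion \eqref{EXA} does not work. In a general $\mathcal{A}$-in-approximation one only has $U_n \subset U_{n+1}^{\Lambda_\mathcal{A}}$, \emph{not} $U_n \subset U_{n+1}$; in the examples that matter (e.g.\ the large $\Lambda_\mathcal{A}$-$T_4$ construction of Section~\ref{fours}) consecutive $U_n$'s are typically disjoint. Hence the set $\{x:\mathcal{B}(W_n)(x)\notin U_{n+1}\}$ is all of $\Omega$ at every step, so $|S_n|=|\Omega|$ for every $n$ and the Borel--Cantelli argument is vacuous. There is no ``geometry of $U_n\setminus U_{n+1}$'' to exploit: every piece must be modified at every stage.

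The paper circumvents this by a different mechanism. The tolerances $\delta_{i+1}$ are chosen \emph{adaptively after $V_i$ has been constructed}: one first picks a mollification scale $\eps_i$ with $\|\mathcal{B}(V_i)-\mathcal{B}(V_i)\star\rho_{\eps_i}\|_{L^1(\Omega_i)}\le 1/i$, and then sets $\delta_{i+1}=\eps\,\eps_i/(C\,2^{i+1})$. This forces $\|V_i-V_\eps\|_{C^{k'-1}}$ to be small relative to $\eps_i$, so that in the decomposition
\[
\|\mathcal{B}(V_i)-\mathcal{B}(V_\eps)\|_{L^1}\le \|\mathcal{B}(V_i)-\mathcal{B}(V_i)\star\rho_{\eps_i}\|+\|\mathcal{B}(V_i)\star\rho_{\eps_i}-\mathcal{B}(V_\eps)\star\rho_{\eps_i}\|+\|\mathcal{B}(V_\eps)\star\rho_{\eps_i}-\mathcal{B}(V_\eps)\|
\]
the middle term is controlled by $C\eps_i^{-1}\|V_i-V_\eps\|_{C^{k'-1}}\to 0$ (transferring one derivative onto the mollifier), while the outer terms vanish by construction and by $\mathcal{B}(V_\eps)\in L^1$. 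This yields strong $L^1_{\loc}$ convergence of $\mathcal{B}(V_i)$, hence a.e.\ convergence along a subsequence, and then \eqref{lp} gives $\mathcal{B}(V_\eps)\in K$. The point you are missing is precisely this mollification trick coupling the step size to the oscillation scale of the current approximation.
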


\begin{proof}
Let $W \in C^{k'}(\overline{\Omega},\R^{n'})$. The first thing we do is to replace $W$ with a map $W'$ whose components are piecewise polynomials of degree $k'$ that well approximates $W$ in the $C^{k'}$ norm and has the same boundary datum. To do so, we define for every $j \in \N$
\[
\Omega_j \doteq \{x \in \Omega: \dist(x,\partial \Omega) \ge 2^{-j}\}.
\]
Up to considering $\Omega_{j + j_0}$ for some $j_0 \in \N$, we can assume without loss of generality that $\Omega_0$ is non-empty. Furthermore, again without loss of generality, we can assume that $$0 < \eps < \min_{x \in \overline{\Omega_0}}\dist(\mathcal{B}(W)(x),\partial U_1)$$ and from now on we fix $\eps > 0$. Consider a decreasing sequence $\{c_j\}_j$ of positive numbers such that $c_1 < \frac{\eps}{2}$ and
\begin{equation}\label{vicinov}
\dist(\mathcal{B}(W)(x),\partial U_1) \ge c_{j + 1}, \quad \forall x \in \Omega_{j+1} \setminus \Omega_{j}.
\end{equation}
Applying Lemma \ref{pp} we can find a map $W' \in C^{k'}(\overline{\Omega},\R^{n'})$ whose components are piecewise polynomials of order $k'$ such that
\begin{equation}\label{vicino}
\|W'-W\|_{C^{k'}(\overline{\Omega},\R^{n'})} \le \frac{\eps}{2}, \quad \|W'-W\|_{C^{k'}(\overline{\Omega_{j+1} \setminus \Omega_{j}},\R^{n'})} \le \frac{c_{j+1}}{2}, \forall j,
\end{equation}
and
\begin{equation}\label{bordo}
W'|_{\partial\Omega} = W.
\end{equation}
Now \eqref{vicinov}, \eqref{vicino}, the openness of $U_1$ and the fact that $\mathcal{B}$ is an operator of order $k'$ imply that
\[
\mathcal{B}(W') \in U_1, \text{ in }\Omega.
\]
We now come to the main part of the proof. First, we exploit the property $U_n \subset U_{n+1}^{\Lambda_\A}$ inductively in the following way. We start with $V_1 = W'$. Then, we can apply Proposition \ref{usefulprop} with $U_{i +1}, V_{i}$ instead of $U,W$ and $\delta_{i+ 1}>0$ to find a map $V_{i +1} \in W^{k',\infty} \cap C^{k'-1}(\overline{\Omega},\R^{n'}) $  whose components are piecewise polynomials of degree $k'$ such that:
\begin{enumerate}[\quad\;(i)]
\item\label{equiboud} the $W^{k',\infty} \cap C^{k'-1}$ norm is equibounded by $\max_n\{\diam(U_n),\diam(K)\}$;
\item\label{boudary} $W = V_i = V_{i + 1}$ together with all the derivatives of order $\ell < k'$, on $\partial\Omega$;
\item\label{deltai} $\|V_{i + 1} - V_i\|_{C^{k' -1}} \le \delta_{i+1}$;
\item\label{BVi} $\mathcal{B}(V_{i +1}) \in U_{i +1}$.
\end{enumerate}
The sequence $\{\delta_i\}_i$ is chosen inductively: given $V_i$ and $\delta_i$, we choose suitably $\delta_{i + 1}$, and thus also $V_{i + 1}$ by Proposition \ref{usefulprop}. Using the notation $ \| \cdot \|_{1,i} \doteq \| \cdot \|_{L^1(\Omega_{i},\R^{n'})}$, we find $0 < \eps_i <\min\{2^{-i},\eps_{i -1}\}$ such that
\begin{equation}\label{BVstar}
\|\mathcal{B}(V_i)-\mathcal{B}(V_i)\star \rho_{\eps_i}\|_{1,i} \le \frac{1}{i}.
\end{equation}
In the last equation, we denoted with $\mathcal{B}(V_i)\star \rho_{\eps}$ the mollification of $\mathcal{B}(V_i)$ with the standard even, smooth, compactly supported mollification kernel $\rho_\eps$. Now we choose 
\begin{equation}\label{choicedeltai}
\delta_{i + 1} \doteq \eps\frac{\eps_i}{C2^{i + 1}},
\end{equation}
where $C > 1$ is a universal constant depending only on the choice of the convolution kernel $\rho$. After having made the choice \eqref{choicedeltai}, we continue the iteration. By \eqref{deltai}, we find that $\{V_i\}_i$ is a Cauchy sequence in $C^{k'-1}$. This implies that there exists a limit in the $C^{k'-1}$ topology $V_\eps = \lim_iV_i$. The fact that $V_\eps$ fulfills \eqref{BOUND}-\eqref{BOUNDARY} is an immediate consequence of \eqref{equiboud}-\eqref{boudary}. Furthermore,  \eqref{equiboud} and \eqref{BVi} imply that $\mathcal{B}(V_i)$ is equibounded  in $L^\infty$, and thus the sequence  $\mathcal{B}(V_i)$ is converging weakly-$*$ in $L^\infty$ to $\mathcal{B}(V_\eps)$. We will now prove that
\begin{equation}\label{uinf}
\mathcal{B}(V_\eps) \in K, \text{ in } \Omega.
\end{equation}
The crucial point is that the choice of the sequence $\{\delta_i\}_i$ yields strong $L^1_{\loc}$ convergence of $\mathcal{B}(V_i)$ to $\mathcal{B}(V)$. Once we show this, we can pass to a subsequence that converges pointwise a.e. and use hypothesis \eqref{lp} to conclude \eqref{uinf}. To prove strong $L^1_{\loc}$ convergence, we fix $i_0 \in \N$ and, for all $i > i_0$, we write:
\begin{equation}\label{gradest}
\|\mathcal{B}(V_{i}) - \mathcal{B}(V_\eps)\|_{1,i_0} \le \|\mathcal{B} (V_i) - \mathcal{B}(V_{i})\star\rho_{{\eps}_i}\|_{1,i_0} + \|\mathcal{B}(V_{i})\star\rho_{{\eps}_i} - \mathcal{B}(V_\eps)\star\rho_{{\eps}_i}\|_{1,i_0} + \| \mathcal{B}(V_\eps)-\mathcal{B}(V_\eps)\star\rho_{{\eps}_i}\|_{1,i_0}.
\end{equation}
The first term of the previous sum is converging to $0$ by \eqref{BVstar}, while the latter is converging to $0$ since $\mathcal{B}(V_\eps)$ is an $L^1$ function. It only remains to estimate the middle term of the right hand side of \eqref{gradest}. Since $\mathcal{B} $ is an operator of order $k'$, for the same constant $C  > 1$ appearing in \eqref{choicedeltai}, we can write:
\[
\|\mathcal{B}(V_i)\star\rho_{{\eps}_i} - \mathcal{B}(V_\eps)\star\rho_{{\eps}_i}\|_{1,i_0} \leq C \frac{\|V_i- V_\eps\|_{C^{k'-1}(\overline{\Omega},\R^{n'})}}{\eps_i} \le \frac{C}{\eps_i}\sum_{j = i}^\infty\|V_{j + 1}-V_j\|_{C^{k'-1}} \overset{\eqref{deltai}}{\leq} \frac{C}{\eps_i}\sum_{j = i}^\infty \delta_{j+1}. 
\]
By our choice \eqref{choicedeltai}, we estimate:
\[
\frac{C}{\eps_i}\sum_{j = i}^\infty \delta_{j + 1} \leq \frac{\eps}{2^i}.
\]
Therefore, the right hand side of \eqref{gradest} converges to $0$ as $i \to \infty$. Since $i_0$ was arbitrary and $\Omega_{i_0} \nearrow \Omega$, \eqref{EXA} is proven and it only remains to show \eqref{unif}:
\[
\|V_{\eps} - W\|_{C^{k'-1}(\overline{\Omega},\R^{n'})} \le  \|W' - W\|_{C^{k'-1}} + \|V_\eps-W'\|_{C^{k'-1}} \overset{\eqref{vicino}}{\le} \frac{\eps}{2} + \sum_{i=1}^{\infty}\|V_{i + 1} - V_i\|_{C^{k'-1}} \le \frac{\eps}{2} + \frac{\eps}{2} = \eps.
\]
This concludes the proof.
\end{proof}

\section{The four state problem}\label{fours}

In this section, we study the inclusion

\begin{equation}\label{AINCfour}
\begin{cases}
v(x) \in K \doteq \{a_1,a_2,a_3,a_4\} \subset \R^n,& \text{a.e. in $B_1$,}\\
\mathcal{A}(v) = 0, &\text{ in the sense of distributions},
\end{cases}
\end{equation}
with $v \in L^\infty(B_1,\R^{n'})$, $B_1 \subset \R^m$ being the ball of radius $1$ centred at $0$, and $a_i -a_j \notin \Lambda_\mathcal{A}$ if $i \neq j$. We wish to exploit Theorem \ref{rcexact} to solve \eqref{AINCfour}. In order to do so we need to find a $\mathcal{A}$-in-approximation for $K$. In \cite[Definition 2.6]{FS}, F\"orster and Sz{\'{e}}kelyhidi introduced the notion of large $T_5$ in order to find a solution to \eqref{AINCfour} in the case $\mathcal{A} = \curl$ and five states. We give the analogous definition in our case.

\begin{definition}\label{T4}
Let $S \subset \R^n$ be arbitrary. We say that an ordered set of elements $(a_1,a_2,a_3, a_4)$ \emph{are in $S$-$T_4$ configuration} if there exist $p \in \R^n$, $c_1,c_2,c_3,c_4 \in S \subset \R^n$ and $k_1,k_2,k_3,k_4 \in (1,+\infty)$ such that
\begin{equation}\label{t4}
\begin{cases}
&a_1 = p + k_1c_1\\
&a_2 = p + c_1 + k_2c_2\\
&a_3 = p + c_1 + c_2 + k_3c_3\\
&a_4 = p + c_1 + c_2 + c_3 + k_4c_4\\
&c_1 + c_2 + c_3 + c_4 = 0.
\end{cases}
\end{equation} 
We say that $\{a_1,a_2,a_3,a_4\}$ \emph{form a large $S$-$T_4$ configuration} if there exist $3$ distinct permutations $\sigma_1,\sigma_2,\sigma_3: \{1,2,3,4\} \to \{1,2,3,4\}$ such that the ordered set of vectors $(a_{\sigma_i(1)},a_{\sigma_i(2)},a_{\sigma_i(3)},a_{\sigma_i(4)})$ is in $S$-$T_4$ configuration, i.e. it fulfills
\begin{equation}\label{t4perms}
\begin{cases}
&a_{\sigma_{i}(1)} = p^{\sigma_i} +k^{\sigma_i}_{1}c^{\sigma_i}_{1}\\
&a_{\sigma_{i}(2)} = p^{\sigma_i} + c_1^{\sigma_i} + k^{\sigma_i}_{2}c^{\sigma_i}_{2}\\
&a_{\sigma_i(3)} = p^{\sigma_i} + c_1^{\sigma_i} + c^{\sigma_i}_{2} + k^{\sigma_i}_{3}c^{\sigma_i}_{3}\\
&a_{\sigma_i(4)} = p^{\sigma_i} + c_1^{\sigma_i} + c^{\sigma_i}_{2} + c^{\sigma_i}_{3}+ k^{\sigma_i}_{4}c^{\sigma_i}_{4}\\
&c_1^{\sigma_i} + c^{\sigma_i}_{2} + c^{\sigma_i}_{3}+ c^{\sigma_i}_{4} = 0.
\end{cases}
\end{equation}
for vectors $p^{\sigma_i}$, $k^{\sigma_i}_{\ell}$, $c^{\sigma_i}_{\ell} $, for $1\le i \le 3$, $1 \le \ell \le 4$, and moreover the vectors $c^{\sigma_1}_{\sigma_{1}^{-1}(\ell)},c^{\sigma_2}_{\sigma_{2}^{-1}(\ell)},c^{\sigma_3}_{\sigma_{3}^{-1}(\ell)} \in \Lambda_\A$ are linearly independent for every fixed $\ell \in \{1,2,3,4\}$.
\end{definition}

Definition \ref{T4} becomes meaningful when $S = \Lambda_\A$ for some $\mathcal{A} \in \OP$ with potential $\mathcal{B} \in \op(k',m,n',n)$. In this case, $\Lambda_\A$-$T_4$ configurations are the most studied example of sets without $\Lambda_\A$ that display \emph{flexibility} for approximate solutions. Indeed given a $\Lambda_\A$-$T_4$ configuration $K = \{a_1,a_2,a_3,a_4\}$ as in \eqref{t4} one can find a sequence of equibounded maps $\{u_n\}_n \subset L^\infty$ such that
\[
\begin{cases}
\dist(u_n,K) \to 0, &\text{strongly in }L^1\\
u_n \weak P, &\text{as }n \to \infty,\\
\mathcal{A}(u_n) = 0, &\forall n \in \N,
\end{cases}
\]
where $P \in \POL(k',m)$ is such that $\mathcal{B}(P) = p$ everywhere on $\R^m$. This stems from the fact that $p \in B_r(\{a_1,a_2,a_3,a_4\})^{\Lambda_\A}$ for all $r > 0$ and hence Proposition \ref{usefulprop} applies. For an introduction to $\Lambda_{\curl}$-$T_4$ configurations, that are simply called $T_4$ configurations in the literature, see \cite[Lemma 2.6]{DMU}.
\\
\\
While $\Lambda_\A$-$T_4$ are related to the existence of approximate solutions, large $\Lambda_\A$-$T_4$ configurations yield the existence of $\A$-in approximations of $K$ and hence through Theorem \ref{rcexact} exact solutions. This was noticed first in \cite{FS}. The fact that the existence of a large $\Lambda_\A$-$T_4$ configuration $\{a_1,a_2,a_3,a_4\}$ implies the existence of a $\A$-in-approximation is analogous to the proof of the same fact for the $\curl$ operator given in \cite{FS}, and will be sketched in Subsection \ref{larget4}.
\\
\\
If $S$ is not (a subset of) a cone $\Lambda_\A$, Definition \ref{T4} has a purely algebraic meaning, and we chose to give it in that way for improving the clarity of our exposition. Indeed, in our strategy, we will first find a set $\{a_1,a_2,a_3,a_4\}$ and write it as a large $\R^n$-$T_4$ configuration. At this level, this only means computing the values $p^\sigma, k_\ell^{\sigma_i}, c_\ell^{\sigma_i}$ for which the algebraic condition \eqref{t4} are satisfied for the three permutations $\sigma_1,\sigma_2,\sigma_3$, and the additional requirement on the linear independence of $\left\{c^{\sigma_i}_{\sigma_{i}^{-1}(\ell)}, 1\le i \le 3\right\}$ for all $1\le \ell \le 4$. Note that this is always possible. Subsequently, we find an operator $\A$ for which $\{c_\ell^{\sigma_i}, 1\le i \le 3, 1 \le \ell \le 4\} \subset \Lambda_\A$, thus proving that $\{a_1,a_2,a_3, a_4\}$ is a large $\Lambda_\A$-$T_4$ configuration. More precisely, we construct an operator $\A$ for which
\begin{equation}\label{c}
\{c^{\sigma_i}_\ell: 1\le i \le 3 , 1\le \ell \le 4\} \subset \Lambda_\A
\end{equation}
and
\begin{equation}\label{a-a}
a_i - a_j \notin \Lambda_\A, \quad \forall i \neq j.
\end{equation}
In order to guarantee that $c^{\sigma_1}_{\sigma_{1}^{-1}(\ell)},c^{\sigma_2}_{\sigma_{2}^{-1}(\ell)},c^{\sigma_3}_{\sigma_{3}^{-1}(\ell)}$ are linearly independent, for fixed $\ell \in \{1,2,3,4\}$, we need to have $ n \ge 3$, and hence we fix $n = 3$. We then choose the following vectors:
\begin{equation}\label{aaaa}
a_1 =\left(\begin{array}{c} 0 \\ 0 \\ 0\end{array}\right),\; a_2 =\left(\begin{array}{c} 1 \\ 0 \\ 0\end{array}\right),\; a_3 =\left(\begin{array}{c} 0 \\ 1 \\ 0\end{array}\right),\;a_4 =\left(\begin{array}{c} 0 \\ 0 \\ 1\end{array}\right).
\end{equation}
The three permutations $\sigma_1,\sigma_2,\sigma_3$ and the elements $p^{\sigma_i}$, $k^{\sigma_i}_{\ell}$, $c^{\sigma_i}_{\ell}$, for $1\le \ell \le 4$, $1 \le i \le 3$ for which $\{a_1,a_2,a_3,a_4\}$ form a large $\R^n$-$T_4$ configuration will be given in Subsection \ref{exval}. From now on, we treat all of these values as fixed, explicit values.
\\
\\
Let us now examine requirement \eqref{a-a}. We can rewrite \eqref{a-a} as:
\[
a_i - a_j \notin \Ker(\mathbb{A}(\xi)) = \im(\mathbb{B}(\xi)), \quad \forall \xi \in \R^m,
\]
if, as usual, $\mathcal{B} \in \op(k',m,n',n)$ denotes the potential of $\A$. Heuristically, \eqref{a-a} has more chances to be satisfied once $ \Lambda_\A = \bigcup_{\xi \in \R^m}\im(\mathbb{B}(\xi))$ is chosen as small as possible.  It is therefore natural to ask $n',m < n = 3$, and indeed we fix $n' = 1$ and $m = 2$. Notice anyway that \eqref{c} is asking that $\Lambda_\A$ contains \emph{at least} the $12$ vectors $c_\ell^{\sigma_i}$, and in order to achieve this, we will use our last degree of freedom $k'$. Given the constraints $n' = 1, m=2$, we have that, for $q_1,q_2,q_3 \in \PO(k',2)$ that will be chosen later,
\begin{equation}\label{BP}
\mathbb{B}(\xi) = \left(\begin{array}{c} q_1(\xi)\\ q_2(\xi) \\ q_3(\xi)\end{array}\right).
\end{equation}
We identify the linear application $\mathbb{B}(\xi)$ with its associated matrix. It only remains to deal with \eqref{c}. Each $q_i \in \POL(k',2)$ has $k'+1$ coefficients. \eqref{c} is now equivalent to asking the existence of twelve vectors: $(\xi_1^{\sigma_i},\xi_2^{\sigma_i}), (\xi_3^{\sigma_i},\xi_4^{\sigma_i}), (\xi_5^{\sigma_i},\xi_6^{\sigma_i}),(\xi_7^{\sigma_i},\xi_8^{\sigma_i}) \in \R^2$ , for $1\le i \le 3$, such that
\begin{equation}\label{Pc}
\mathbb{B}((\xi_{2\ell-1}^{\sigma_i}, \xi^{\sigma_i}_{2\ell})) = c^{\sigma_i}_\ell, \quad \forall 1\le i\le 3, \forall 1\le \ell \le 4.
\end{equation}
We randomly generate the vectors $(\xi^{\sigma_i}_{2\ell-1}, \xi^{\sigma_i}_{2\ell}) \in \R^2$, see Subsection \ref{exval} for the explicit values. Recall that also the $12$ vectors $c^{\sigma_i}_\ell \in \R^3$ are fixed, and thus \eqref{Pc} becomes a linear system of $36$ equations that can be solved using the coefficients of $q_1,q_2,q_3$. The right number of variables is therefore $36$, that amounts to ask
\[
3(k' + 1) = 36,
\]
or $k' = 11$. This last choice fixes all the degrees of freedom of $\mathcal{B}\in \op(11,2,1,3)$. Of course, we should now find an operator $\A \in \op(k,2,3,N)$ whose potential is $\mathcal{B}$. We define our candidate $\A$ by writing its symbols $\mathbb{A}(\xi)$. First, we choose
\[
k = 11 \text{ and } N=3,
\]
and set, for all $\xi \in \R^2$,
\begin{equation}\label{Axi}
\mathbb{A}(\xi) \doteq \left(\begin{array}{ccc} 0 & -q_3(\xi)& q_2(\xi)\\ -q_3(\xi) & 0 & q_1(\xi) \\ -q_2(\xi) & q_1(\xi) & 0\end{array}\right).
\end{equation}
In order to apply the convex integration methods of the previous section, we need the operator $\mathbb{A}$ to be of constant rank and balanced. Furthermore, we need to find a way to verify \eqref{a-a}. This will be done in Theorem \ref{check}. First, we collect our set of assumptions in the following:

\begin{prop}\label{computer}
Let $a_1,a_2,a_3,a_4$ be as in \eqref{aaaa}, and $p^{\sigma_i}$, $k^{\sigma_i}_{\ell}$, $c^{\sigma_i}_{\ell}$, $(\xi_{2\ell-1}^{\sigma_i}, \xi^{\sigma_i}_{2\ell})$ for $1\le \ell \le 4$, $1 \le i \le 3$ as in Subsection \ref{exval}. Then, there are unique polynomials $q_1,q_2,q_3 \in \PO(11,2)$ such that \eqref{Pc} is feasible. Furthermore:
\begin{enumerate}
\item\label{comp0} \eqref{t4perms} holds for all $\sigma_i$, $1 \le i \le 3$;
\item\label{comp1} $c^{\sigma_1}_{\sigma_{1}^{-1}(\ell)},c^{\sigma_2}_{\sigma_{2}^{-1}(\ell)},c^{\sigma_3}_{\sigma_{3}^{-1}(\ell)}$ are linearly independent for all fixed $\ell \in \{1,2,3,4\}$;
\item\label{comp3} $q_i$ and $q_j$ have no common zero on $\mathbb{S}^1$, for all $i \neq j$;
\item\label{comp4} $q_i + q_j$ has no common zero with $q_k$ on $\mathbb{S}^1$, for all $i,j,k$ such that $\{i,j,k\} = \{1,2,3\}$;
\end{enumerate}
\end{prop}
\begin{proof}
All of the above checks have been made using Maple 2020 using symbolic calculus and the fractional representation of rational numbers, hence they are formally justified. We will now explain how to perform these computations on a computer in such a way that the result is rigorous, especially \eqref{comp3} and \eqref{comp4}.
\\
\\
We use coordinates $\xi = (x,y)$ in $\R^2$. First, \eqref{comp0}-\eqref{comp1} are simple computations that could be potentially done by hand. Next, one checks that \eqref{Pc} has a solution. Since the solution is unique and the coefficients are particularly lengthy, we do not write them here explicitly. Notice that, since \eqref{Pc} is a linear system with rational entries, the solution is also rational. Thus, every coefficient of $q_1,q_2,q_3$ can be exactly represented as a fraction. Furthermore, using the explicit form of $q_1,q_2,q_3$, one can easily check the following, for all $1\le i,j \le 3$:
\begin{equation}\label{coeff}
\text{the coefficient of $x^{11}$ of $q_i(x,y)$ and of $q_{i}(x,y) + q_j(x,y)$ is non-zero.}
\end{equation}
Now we turn to \eqref{comp3}-\eqref{comp4}. Let us define $r_k(\xi) \doteq q_i(\xi) + q_j(\xi)$,  for $i,j,k$ such that $\{i,j,k\} = \{1,2,3\}$. By homogeneity, we have, for all $i \in \{1,2,3\}$ and $y \neq 0$,
\begin{equation}\label{qiri}
q_i(x,y) = y^{11}q_i\left(\frac{x}{y},1\right), \quad r_i(x,y) = y^{11}r_i\left(\frac{x}{y},1\right).
\end{equation}
Therefore, we can associate to every $q_i$ and $r_i$ a polynomial of one variable, $Q_i(z)$ and $R_i(z)$, defined as
\[
Q_i(z) \doteq q_i(z,1), \quad R_i(z) \doteq r_i(z,1).
\]
By \eqref{coeff} and \eqref{qiri}, zeroes of $q_i$ and $r_i$ are in bijective correspondence with the ones of $Q_i$, $R_i$, in the sense that $q_i(x_0,y_0) = 0$ if and only if $Q_i\left(\frac{x_0}{y_0}\right) = 0$, and analogously for $r_i$. \eqref{comp3}-\eqref{comp4} then become equivalent to the following:
\begin{enumerate}[	(i)]
\item\label{comp3bis} $Q_i$ and $Q_j$ have no common zero, for all $i \neq j$;
\item\label{comp4bis} $R_i$ has no common zero with $Q_i$, for all $i \in \{1,2,3\}$.
\end{enumerate}
Given the explicit forms of the $Q_i$ and $R_i$, there are two ways to check \eqref{comp3bis}-\eqref{comp4bis}. 
\\
\\
The first starts by computing the zeroes of the six polynomials $Q_1,Q_2,Q_3,R_1,R_2,R_3$ numerically. Of course, this does not yield a rigorous proof, but then one only uses the numerical values to find (small) intervals with rational endpoints \emph{around} those numerical zeroes, in such a way to have that the polynomial evaluated at the two endpoints has two different signs. Notice that the evaluation at the endpoints again gives an exact value, as the polynomial is rational and the endpoints have been chosen to be rational. By continuity it follows that a zero of the polynomial lies inside this interval. Now, instead of having \emph{different} zeroes, we may simply try to find \emph{disjoint} intervals, which would suffice to show \eqref{comp3bis}-\eqref{comp4bis}.
\\
\\
The second way to check \eqref{comp3bis}-\eqref{comp4bis} is much quicker in terms of computations, and it is the method we employed. This simply consist in computing the GCD of every couple $Q_i,Q_j$ and $Q_i,R_i$, varying $1\le i\neq j \le 3$. If the GCD of these couples is a constant, then clearly they can have no common zero, and this turns out to be the case in our particular example. Since the polynomials depend only on one variable, we can use the Euclidean algorithm to compute the GCD among the couples of polynomials we are interested in. Using the built-in \emph{gcd} function of Maple 2020, we have checked that $GCD(Q_i,R_i) = GCD(Q_i,Q_j) = 1, \forall 1\le i \neq j \le 3$, and hence \eqref{comp3bis}-\eqref{comp4bis} hold.
\end{proof}

\begin{corollary}\label{c:potential}
Let $a_1,a_2,a_3,a_4$ be as in \eqref{aaaa}, and $p^{\sigma_i}$, $k^{\sigma_i}_{\ell}$, $c^{\sigma_i}_{\ell}$, $(\xi_{2\ell-1}^{\sigma_i}, \xi^{\sigma_i}_{2\ell})$ for $1\le \ell \le 4$, $1 \le i \le 3$ as in Subsection \ref{exval}. Finally, let $q_1,q_2,q_3$ be the only solution of \eqref{Pc} and define the two operators $\mathcal{A}$ as in \eqref{Axi} and $\mathcal{B}$ as in \eqref{BP}. Then:
\begin{itemize}
\item $a_i - a_j \notin \Lambda_\A, \forall 1 \le i < j \le 4$;
\item $\mathcal{B}$ is a potential for $\mathcal{A}$ in the sense of \eqref{pot};
\item $\mathcal{A}$ has constant rank and is balanced;
\end{itemize}
\end{corollary}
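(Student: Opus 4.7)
The plan is to verify the three assertions by direct algebraic manipulation, using the combinatorial input provided by Proposition \ref{computer}: the absence of common zeros among the $q_i$ (property \eqref{comp3}) and among $q_k$ and $r_k \doteq q_i + q_j$ (property \eqref{comp4}). No further computer-aided step is needed beyond what is already in Proposition \ref{computer}.

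First I would show that $\mathcal{B}$ is a potential for $\mathcal{A}$ and simultaneously that $\mathcal{A}$ has constant rank. For every $\xi \in \R^2$ one computes directly from \eqref{Axi} and \eqref{BP} that
\[
\mathbb{A}(\xi)\mathbb{B}(\xi) = \bigl(-q_3 q_2 + q_2 q_3,\; -q_3 q_1 + q_1 q_3,\; -q_2 q_1 + q_1 q_2\bigr)^{T} = 0,
\]
so $\im(\mathbb{B}(\xi)) \subseteq \Ker(\mathbb{A}(\xi))$. By \eqref{comp3} the polynomials $q_1,q_2,q_3$ have no common zero on $\mathbb{S}^1$ (hence on $\R^2\setminus\{0\}$, by homogeneity), so $\mathbb{B}(\xi)$ is a nonzero column and $\im(\mathbb{B}(\xi))$ is $1$-dimensional. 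It therefore suffices to show that $\Ker(\mathbb{A}(\xi))$ has dimension exactly $1$, or equivalently that $\rank(\mathbb{A}(\xi)) = 2$ for all $\xi \neq 0$. An expansion of the determinant gives $\det \mathbb{A}(\xi) = q_1 q_2 q_3 - q_1 q_2 q_3 = 0$, so the rank is at most $2$. For the lower bound, I will distinguish cases based on how many $q_i(\xi)$ vanish: by \eqref{comp3}, at most one $q_i(\xi)$ can be zero, and in each of the four resulting cases (none vanish, or exactly $q_1$, $q_2$, $q_3$ vanishes) a suitable $2\times 2$ minor of $\mathbb{A}(\xi)$ is visibly nonzero. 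This simultaneously gives constant rank and $\Ker(\mathbb{A}(\xi)) = \im(\mathbb{B}(\xi))$.

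Next, the balanced property is immediate from Proposition \ref{computer}: fix any $\ell \in \{1,2,3,4\}$; by \eqref{comp1} the vectors $c^{\sigma_1}_{\sigma_1^{-1}(\ell)}, c^{\sigma_2}_{\sigma_2^{-1}(\ell)}, c^{\sigma_3}_{\sigma_3^{-1}(\ell)}$ are linearly independent, and by construction \eqref{Pc} they all lie in $\im(\mathbb{B}(\xi_\bullet)) \subseteq \Lambda_{\mathcal{A}}$, so $\spn(\Lambda_{\mathcal{A}}) = \R^3$.

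Finally, for $a_i - a_j \notin \Lambda_{\mathcal{A}}$, from \eqref{aaaa} the six differences $a_i - a_j$ are, up to sign, the standard basis vectors $e_1, e_2, e_3$ and the vectors $e_1-e_2, e_1-e_3, e_2-e_3$. Since $\im(\mathbb{B}(\xi)) = \spn\{(q_1(\xi), q_2(\xi), q_3(\xi))^T\}$, having $e_i \in \Lambda_{\mathcal{A}}$ would force two of the $q_j(\xi)$ to vanish simultaneously at some $\xi \neq 0$, contradicting \eqref{comp3}; and having $e_i - e_j \in \Lambda_{\mathcal{A}}$ would force, say, $q_k(\xi) = 0$ together with $q_i(\xi) + q_j(\xi) = r_k(\xi) = 0$, contradicting \eqref{comp4}. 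These are routine case checks — the substantive content has already been discharged in Proposition \ref{computer}, and this is where the GCD computation of that proposition is genuinely used. I expect no real obstacle beyond keeping the sign bookkeeping straight in the rank-$2$ case analysis.
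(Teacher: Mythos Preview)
Your proposal is correct and follows essentially the same route as the paper's proof: both use \eqref{comp3} and \eqref{comp4} from Proposition \ref{computer} to handle the $a_i-a_j$ differences, both verify $\mathbb{A}(\xi)\mathbb{B}(\xi)=0$ and use \eqref{comp3} to get $\mathbb{B}(\xi)\neq 0$, and both derive balancedness from \eqref{comp1} and \eqref{Pc}. The only cosmetic difference is that for $\rank(\mathbb{A}(\xi))\ge 2$ the paper simply observes that the three principal $2\times 2$ minors of $\mathbb{A}(\xi)$ are $-q_1^2(\xi),\, q_2^2(\xi),\, -q_3^2(\xi)$, at least one of which is nonzero by \eqref{comp3}, whereas you do an equivalent case analysis on which $q_i(\xi)$ vanishes.
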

\begin{proof}
Fix $1\le i < j \le 4$. To see that $a_i - a_j \notin \Lambda_\A $ we notice that, by \eqref{aaaa}, $a_i - a_j$ either has two zeroes, or it has a zero component while the other two are $1$ and -1. In the first case, i.e. when $a_i - a_j$ has two zeroes, $a_i - a_j \notin \Lambda_\A$ stems from \eqref{comp3} of Proposition \ref{computer}, while in the second, $a_i - a_j \notin \Lambda_\A$ is a consequence of \eqref{comp4} of Proposition \ref{computer}.
\\
\\
We show now that $\mathcal{B}$ is a potential for $\A$. For all $\xi \in \R^2$, it holds
\[
\im(\mathbb{B}(\xi)) \subset \Ker(\mathbb{A}(\xi)),
\]
that shows $\rank(\mathbb{A}(\xi)) \le 2$ since $\mathbb{B}(\xi) \neq 0, \forall \xi \in \R^2\setminus\{0\}$ by \eqref{comp3} of Proposition \ref{computer}. The principal $2\times 2$ minors of $\mathbb{A}(\xi)$ read as $-q_3^2(\xi), q_2^2(\xi), -q_1^2(\xi)$ and thus again by property \eqref{comp3} of Proposition \ref{computer}, we find that $\rank(\mathbb{A}(\xi)) = 2$ for all $\xi \in \R^2\setminus\{0\}$. It also follows that
\[
\im(\mathbb{B}(\xi)) = \Ker(\mathbb{A}(\xi)).
\]
This shows that $\mathcal{A}$ has constant rank and $\mathcal{B}$ is the potential of $\A$ in the sense of \eqref{pot}. Finally, we show that $\A$ is balanced.  By \eqref{Pc} and \eqref{comp1} of Proposition \ref{computer} $\im(\mathbb{B}(\xi))$ contains three linearly independent vectors. Since $\im(\mathbb{B}(\xi)) \subset \Lambda_\A$ for all $\xi \in \R^2$, it follows $\spn(\Lambda_\A) = \R^3$ and the proof is finished.
\end{proof}
Now we can finally prove the main result of this paper, namely the existence of a nontrivial solution of \eqref{AINCfour}.

\begin{theorem}\label{check}
Let $a_1,a_2,a_3,a_4$ be as in \eqref{aaaa}, $q_1,q_2,q_3 \in \PO(11,2)$ be defined by \eqref{Pc} for the values $(\xi_{2\ell-1}^{\sigma_i}, \xi^{\sigma_i}_{2\ell})$ and $c_{\ell}^{\sigma_i}$ given in Subsection \ref{exval}. Finally define the two operators $\mathcal{A}$ as in \eqref{Axi} and $\mathcal{B}$ as in \eqref{BP}. Then, there exists a non-constant solution $v \in L^\infty(B_1,\R^3)$ of
\[
\begin{cases}
v(x) \in K \doteq \{a_1,a_2,a_3,a_4\}, & \text{a.e. in $B_1$,}\\
\mathcal{A}(v) = 0, &\text{in the sense of distributions},\\
a_i - a_j \notin \Lambda_\A, & \forall i \neq j.
\end{cases}
\]
Furthermore, $v$ takes all four values of $K$, and $v$ admits a potential, i.e. there exists $V \in W^{11,\infty}\cap C^{10}(\overline{B_1})$ such that a.e. on $\Omega$
\[
v = \mathcal{B}(V) 
\]
and $V$ coincides with a polynomial of order $11$ on the boundary of $\partial B_1$.
\end{theorem}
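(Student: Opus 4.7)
The plan is to combine the algebraic information assembled in Proposition \ref{computer} and Corollary \ref{c:potential} with the convex integration machinery of Section \ref{CI}. First, I would verify that $K = \{a_1,a_2,a_3,a_4\}$ is a large $\Lambda_\A$-$T_4$ configuration in the sense of Definition \ref{T4}: the identities \eqref{t4perms} for the three permutations $\sigma_1,\sigma_2,\sigma_3$ are precisely \eqref{comp0} of Proposition \ref{computer}; each $c_\ell^{\sigma_i}$ belongs to $\Lambda_\A$ because $c_\ell^{\sigma_i} = \mathbb{B}((\xi^{\sigma_i}_{2\ell-1},\xi^{\sigma_i}_{2\ell})) \in \im(\mathbb{B}(\cdot)) = \Ker(\mathbb{A}(\cdot))$ by \eqref{Pc} together with Corollary \ref{c:potential}; and the required linear independence of $c^{\sigma_1}_{\sigma_1^{-1}(\ell)},c^{\sigma_2}_{\sigma_2^{-1}(\ell)},c^{\sigma_3}_{\sigma_3^{-1}(\ell)}$ for each $\ell$ is \eqref{comp1}.

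From this large $\Lambda_\A$-$T_4$ configuration, Subsection \ref{larget4} (which adapts the construction of F\"orster--Sz\'ekelyhidi \cite{FS} from $\curl$ to our balanced constant-rank setting) produces an $\A$-in-approximation $\{U_n\}_n$ of $K$ in the sense of Definition \ref{INAPP}, built from neighborhoods of the three points $p^{\sigma_i}$ and shrinking neighborhoods of the four states $a_j$. I would then apply Theorem \ref{rcexact}: since $\A$ is balanced, Proposition \ref{surj} furnishes a polynomial $W \in \POL(11,2)$ (recall $n'=1$) with $\mathcal{B}(W) \equiv p^{\sigma_1}$ on $\R^2$; choosing $n_0$ so large that $p^{\sigma_1} \in U_{n_0}$ and invoking Theorem \ref{rcexact} on $\overline{B_1}$ with this initial datum and the in-approximation $\{U_{n_0+k}\}_{k\ge 0}$, I would obtain $V \in W^{11,\infty} \cap C^{10}(\overline{B_1})$, agreeing with $W$ together with all derivatives of order $\ell < 11$ on $\partial B_1$, such that $v \doteq \mathcal{B}(V) \in K$ a.e. in $B_1$. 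The solution property $\mathcal{A}(v) = 0$ is automatic from $v = \mathcal{B}(V)$ and \eqref{pot}, while $a_i - a_j \notin \Lambda_\A$ is already in Corollary \ref{c:potential}, and the polynomial boundary datum $W$ of order $11$ matches the stated conclusion.

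The main remaining, and in my opinion the only delicate, point is to ensure that $v$ attains \emph{all four} values of $K$ on sets of positive measure, rather than collapsing onto a subset. To secure this I would force the very first iteration inside the proof of Theorem \ref{rcexact} to use the $\A$-laminate of finite order supported on all four states arising from the $\sigma_1$-$T_4$ decomposition of $p^{\sigma_1}$, with weights determined by the constants $k_j^{\sigma_1}$. By property \eqref{sslof} of Proposition \ref{ind} this yields $V_1$ with $|\{\mathcal{B}(V_1) \in B_\beta(a_j)\}| \ge \lambda_j|B_1|$ for some $\lambda_j > 0$ and for each $j \in \{1,2,3,4\}$. If the in-approximation $\{U_{n_0+k}\}_k$ is arranged (as is possible in the construction in Subsection \ref{larget4}) so that for $k$ large $U_{n_0+k}$ is the disjoint union of four shrinking balls around the $a_j$, then in all subsequent iterations the mass attached to each basin of attraction of $a_j$ can only be redistributed within that basin. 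Combined with the strong $L^1_{\loc}$ convergence $\mathcal{B}(V_n) \to v$ established in the proof of Theorem \ref{rcexact}, this gives $|\{v = a_j\}| > 0$ for every $j$, so $v$ is non-constant and takes all four values, completing the proof.
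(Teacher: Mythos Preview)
Your argument up to and including the invocation of Theorem \ref{rcexact} is fine and mirrors the paper's.  The divergence is entirely in the last paragraph, where you try to guarantee that $v$ attains all four values.

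Your ``basin of attraction'' argument does not work as stated.  Suppose, as you assume, that for large $k$ one has $U_{n_0+k}=\bigcup_{j=1}^4 B_{r_k}(a_j)$ with $r_k\downarrow 0$.  Take a point $e\in B_{r_k}(a_1)\setminus \overline{B_{r_{k+1}}(a_1)}$.  In order to push $e$ into $U_{n_0+k+1}$ via Proposition \ref{usefulprop} you need an $\A$-laminate $\nu$ supported in $U_{n_0+k+1}$ with barycentre $e$.  Such a $\nu$ \emph{cannot} be supported in $B_{r_{k+1}}(a_1)$ alone, because the barycentre of any probability measure on that ball lies in its convex hull $\overline{B_{r_{k+1}}(a_1)}$, which does not contain $e$.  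Hence the laminate necessarily puts mass in the other balls, and mass is genuinely transferred between basins at every step.  The very condition $U_n\subset U_{n+1}^{\Lambda_\A}$ for a \emph{shrinking} in-approximation forces this cross-basin transport; so the redistribution claim is incompatible with your own structural assumption on $\{U_{n_0+k}\}$.  Without a quantitative lower bound on the mass retained near each $a_j$ --- which you do not provide and which is not available from the abstract machinery of Section \ref{CI} --- the strong $L^1_{\loc}$ convergence by itself says nothing about the level sets of the limit.

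The paper avoids this difficulty by a completely different device.  Rather than starting from a polynomial $W$ with $\mathcal{B}(W)\equiv p^{\sigma_1}$, one first builds (using the three linearly independent directions of Proposition \ref{computer}\,\eqref{comp1} together with Proposition \ref{ind}) a map $W$ with $\mathcal{B}(W)\in U_1$ whose image is \emph{not contained in any affine plane} of $\R^3$.  Theorem \ref{rcexact} then produces $V_\eps\to W$ in $C^{10}$, hence $\mathcal{B}(V_\eps)\weak \mathcal{B}(W)$ in $L^\infty$.  If $\mathcal{B}(V_\eps)$ took values in a three-element subset, say $\{a_1,a_2,a_3\}$, then $\mathcal{B}(V_\eps)\in\co(\{a_1,a_2,a_3\})$, and Mazur's lemma forces the weak limit $\mathcal{B}(W)$ into that two-dimensional affine set, contradicting the choice of $W$.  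Since there are only four three-element subsets, for $\eps$ small enough $\mathcal{B}(V_\eps)$ must take all four values.  This argument requires no tracking of mass through the iteration and is the missing idea in your proposal.
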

\begin{proof}
By \eqref{comp0}-\eqref{comp1} of Proposition \ref{computer}, we find that $\{a_1,a_2,a_3,a_4\}$ form a large $\Lambda_\A$-$T_4$ configuration. Moreover, Corollary \ref{c:potential} yields that $a_i - a_j \notin \Lambda_\A,\forall 1 \le i < j \le 4$. By Theorem \ref{existenceinapp}, there exists a $\A$-in-approximation $\{U_n\}_n$ of $K$. Since by Corollary \ref{c:potential} $\A$ is a balanced and of constant rank with potential $\mathcal{B}$, we are in position to apply Theorem \ref{rcexact}. Now fix any polynomial $r \in \POL(11,2)$ such that $\mathcal{B}(r) \in U_1$ everywhere on $\R^2$. This exists by Proposition \ref{surj}. Using the existence of three linearly independent direction $c_1,c_2,c_3 \in \Lambda_\A$, that is \eqref{comp1} of Proposition \ref{computer}, in combination with Proposition \ref{ind}, it is not difficult to build a map $W \in C^{11}( \overline{B_1})$ such that $W(x) = r(x), \text{ on } \partial B_1$, $\mathcal{B}(W)(x) \in U_1$, $\forall x \in B_1$
\begin{equation}
\spn\{\im(\mathcal{B}(W))\} \text{ is not contained in an affine subspace of $\R^3$ of dimension $\le 2$}.\label{propfi3}
\end{equation}
Now, by Theorem \ref{rcexact} we find a family $V_\eps$ of maps that are equibounded in $W^{11,\infty}\cap C^{10}(\overline{B_1})$ such that $V_\eps = r$ on $\partial B_1$, $\mathcal{B}(V_\eps) \in K$ a.e. and $V_\eps \to W$ as $\eps \to 0^+$. This yields the weak-$*$ convergence of $\mathcal{B}(V_\eps)$ to $\mathcal{B}(W)$ in $L^\infty$, and hence the weak convergence in $L^2$. If, by contradiction, for all $\eps > 0$, $\mathcal{B}(V_\eps)$ belonged to a proper subset of $K$, say to $\{a_1,a_2,a_3\}$, then
\[
\mathcal{B}(V_\eps) \in \co(\{a_1,a_2,a_3\}), \quad \forall \eps >0.
\]
By Mazur Lemma we would find that $\mathcal{B}(W)(x) \in \co(\{a_1,a_2,a_3\})$, for all $x \in B_1$, and this is in contradiction with $\eqref{propfi3}$. This concludes the proof of the Theorem.
\end{proof}

\subsection{Large $\Lambda_\A$-$T_4$ configurations and in-approximations}\label{larget4}

In this subsection, we collect the main results concerning large $\Lambda_\A$-$T_4$ configurations that we used in the previous section. We will always work with the operator $\A$ defined in $\eqref{Axi}$ and use the objects and the notation introduced in the previous section. Most of the theory immediately follows from the results of \cite{FS} with minor modifications. Thus, some proofs will be omitted and precise reference to the corresponding results of \cite{FS} will be provided.  Let us start with the following:

\begin{lemma}\label{IMT}
Let $\{a_1,a_2,a_3,a_4\} \subset \R^3$ be defined as in \eqref{aaaa}, and let $\sigma_i, 1\le i\le 3$ be the three permutations of \eqref{permut} for which the \emph{ordered} sets $(a_{\sigma_i(1)},a_{\sigma_i(2)},a_{\sigma_i(3)},a_{\sigma_i(4)})$ are in $\Lambda_\mathcal{A}$-$T_4$ configuration. Define $A_i \doteq (a_{\sigma_i(1)},a_{\sigma_i(2)},a_{\sigma_i(3)},a_{\sigma_i(4)}) \subset (\R^{3})^4$. Then, the following hold for all $1\le i \le 3$:
\begin{enumerate}
\item\label{intorno} there exists $\eps > 0$ such that all points $X = (x_1,x_2,x_3,x_4) \in B_\eps(A_i)$ are in $\Lambda_\A$-$T_4$ configuration;
\item \label{intornolarge} there exists $\eps > 0$ such that all points $X = (x_1,x_2,x_3,x_4) \in B_\eps(A_1)$ form a large $\Lambda_\A$-$T_4$ configuration with the same permutations \eqref{permut};
\item\label{welldef} write $X = (x_1,x_2,x_3,x_4) \in B_\eps(A_1)$ as in \eqref{t4perms} for vectors $p^{\sigma_i}(X)$, $k^{\sigma_i}_{\ell}(X)$, $c^{\sigma_i}_{\ell}(X)$, for $1\le i \le 3$, $1 \le \ell \le 4$. Then the maps $\Phi^{\sigma_i}: B_\eps(A_1) \to (\R^3)^{ 4}$ defined as $$\Phi^\sigma(X) \doteq (c_1^{\sigma_i}(X),c_2^{\sigma_i}(X),c_3^{\sigma_i}(X),c_4^{\sigma_i}(X))$$ are well-defined and smooth.
 \end{enumerate}
\end{lemma}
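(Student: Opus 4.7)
The plan is a single application of the implicit function theorem, performed separately for each permutation $\sigma_i$, from which all three conclusions follow. Fix $i \in \{1,2,3\}$ and set $\sigma = \sigma_i$. By \eqref{comp3} of Proposition \ref{computer} the vector $\mathbb{B}(\xi)(1) = (q_1(\xi), q_2(\xi), q_3(\xi))^T$ never vanishes on $\R^2 \setminus \{0\}$, and by Corollary \ref{c:potential} $\A$ has constant rank. So locally near each nonzero $c_0 = \mathbb{B}(\xi_0)(\eta_0)$ the cone $\Lambda_\A$ is a smooth $2$-manifold, parameterized by $(\xi, \eta) \in \mathbb{S}^1 \times \R \mapsto \mathbb{B}(\xi)(\eta)$ (an immersion at the relevant points; otherwise a minor local reparameterization suffices). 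I then consider the smooth map
\begin{equation*}
F \colon \R^3 \times (\mathbb{S}^1 \times \R)^4 \times \R^4 \longrightarrow (\R^3)^4 \times \R^3
\end{equation*}
sending $(p, (\xi_\ell, \eta_\ell)_\ell, (k_\ell)_\ell)$ to the quadruple of right-hand sides of \eqref{t4perms}, with $c_\ell \doteq \mathbb{B}(\xi_\ell)(\eta_\ell)$, together with $c_1 + c_2 + c_3 + c_4$. Source and target both have dimension $3 + 8 + 4 = 15 = 12 + 3$, and at the specific values from Subsection \ref{exval} one has $F = (A_i, 0)$.

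The decisive step is to check that the differential $dF$ is invertible at this point. Given the fully explicit rational data, this reduces to a linear-algebra computation that can be carried out symbolically, in the same spirit as the verifications in Proposition \ref{computer}; alternatively one can argue structurally using the closed-loop combinatorics of the $T_4$ equations, in analogy with the $T_5$ case treated in \cite{FS}. Once invertibility is established, the implicit function theorem furnishes smooth local inverses $X \mapsto (p^\sigma(X), (\xi^\sigma_\ell(X), \eta^\sigma_\ell(X)), (k^\sigma_\ell(X)))$ defined on a ball $B_{\eps_i}(A_i)$; setting $c^\sigma_\ell(X) \doteq \mathbb{B}(\xi^\sigma_\ell(X))(\eta^\sigma_\ell(X))$ then proves \eqref{intorno} and \eqref{welldef} simultaneously.

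For \eqref{intornolarge}, one observes that for $X \in B_\eps(A_1)$ with $\eps$ sufficiently small, each $\sigma_i$-permuted tuple $(x_{\sigma_i(1)}, \dots, x_{\sigma_i(4)})$ lies in the corresponding $B_{\eps_i}(A_i)$, so the data $c^{\sigma_i}_\ell(X)$ are defined for all three permutations. The linear independence of the triples $c^{\sigma_1}_{\sigma_1^{-1}(\ell)}(X), c^{\sigma_2}_{\sigma_2^{-1}(\ell)}(X), c^{\sigma_3}_{\sigma_3^{-1}(\ell)}(X)$ holds at $X = A_1$ by \eqref{comp1} of Proposition \ref{computer}, and since it is an open condition preserved by the continuous dependence on $X$ from \eqref{welldef}, it persists on a smaller ball. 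I expect the invertibility of $dF$ to be the main obstacle; the remaining steps are standard implicit-function-theorem and openness reasoning.
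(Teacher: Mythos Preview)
Your approach is correct and close in spirit to the paper's, with two structural differences worth noting. First, the paper parameterizes each $c_\ell \in \Lambda_\A$ directly by $\xi \in \R^2$ via $v(\xi) = (q_1(\xi),q_2(\xi),q_3(\xi))$, using the degree-$11$ homogeneity to absorb the scalar $\eta$; this sidesteps your immersion caveat on the $\mathbb{S}^1 \times \R$ chart. Second, instead of a single $15 \times 15$ inverse function theorem, the paper proceeds in two stages: it first applies the implicit function theorem to the closing condition $\Psi(\xi_1,\dots,\xi_8) = v(\xi_1,\xi_2)+\cdots+v(\xi_7,\xi_8) = 0$, solving for three of the $\xi$-coordinates as smooth functions of the remaining five, and then applies the inverse function theorem to the resulting map $\R^3\times\R^4\times\R^5 \to \R^{12}$. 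Both routes ultimately reduce to checking nonvanishing of explicit rational determinants by computer, exactly as you anticipate; the two-step version has the mild practical advantage of smaller matrices.

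One caution: your suggested alternative of arguing ``structurally using the closed-loop combinatorics \dots in analogy with the $T_5$ case treated in \cite{FS}'' does not go through here. The paper explicitly points out that the argument of \cite[Lemma 2.4]{FS} relies on the characterization of $T_4$ configurations for the $\curl$ operator from \cite{LSR}, which has no analogue for the present operator $\A$. The symbolic determinant check is genuinely required, not merely convenient.
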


Here we cannot argue as in \cite[Lemma 2.4]{FS} since it relies heavily on the characterization of $T_4$ configurations for the $\curl$ operator shown in \cite{LSR}, and hence we explain the proof in detail.

\begin{proof}
Clearly, \eqref{intornolarge} follows from Definition \ref{T4} and \eqref{intorno}. To show \eqref{intorno}-\eqref{welldef}, we rely on the implicit function theorem and the inverse function theorem. Throughout the proof, we fix  $1\le i \le 3$. First, we use the implicit function theorem to show that the map $\Psi$ defined as
\[
\Psi_i(\xi_1,\xi_2,\xi_3,\xi_4,\xi_5,\xi_6,\xi_7,\xi_8) = v(\xi_1,\xi_2)+ v(\xi_3,\xi_4) + v(\xi_5,\xi_6) + v(\xi_7,\xi_8)
\]
has a non-degenerate set of zeroes in a neighbourhood of $\Xi^{\sigma_i} \doteq (\xi_1^{\sigma_i},\xi_2^{\sigma_i},\xi_3^{\sigma_i},\xi_4^{\sigma_i},\xi_5^{\sigma_i},\xi_6^{\sigma_i},\xi_7^{\sigma_i},\xi_8^{\sigma_i})$, for the explicit values  $\xi_\ell^{\sigma_i}$ of Subsection \ref{exval}. To do so, it is sufficient to compute the determinants of the matrix
\[
(\partial_2v(\xi_5^{\sigma_i},\xi_6^{\sigma_i})| \partial_1v(\xi_7^{\sigma_i},\xi_8^{\sigma_i})|\partial_2v(\xi_7^{\sigma_i},\xi_8^{\sigma_i}))
\]
and see that they are all non-zero. This can be easily checked with the use of a computer. We infer that in a small neighbourhood $D$ of $(\xi_1^{\sigma_i},\xi_2^{\sigma_i},\xi_3^{\sigma_i},\xi_4^{\sigma_i},\xi_5^{\sigma_i},\xi_6^{\sigma_i},\xi_7^{\sigma_i},\xi_8^{\sigma_i})$, $$\Psi_i(\xi_1,\xi_2,\xi_3,\xi_4,\xi_5,\xi_6,\xi_7,\xi_8) = 0$$ if and only if
\[
\xi_\ell = \xi_\ell(\xi_1,\xi_2,\xi_3,\xi_4,\xi_5), \quad \forall 6\le \ell \le 8,
\]
for some smooth maps $\xi_\ell$ defined in a small neighbourhood of $(\xi_1^{\sigma_i},\xi_2^{\sigma_i},\xi_3^{\sigma_i},\xi_4^{\sigma_i},\xi_5^{\sigma_i})$. Define $D' \doteq \pi(D)$, where $\pi$ is the projection on the first $5$ coordinates. Observe that $D'$ is open. Now finally define
\[
F_i: \R^3\times \R^4 \times D' \to \R^{12},
\]
as
\[
F_i(p,k_1,k_2,k_3,k_4,\bar \xi) \doteq \left(\begin{array}{c} p + k_1v(\xi_1,\xi_2)\\ p + v(\xi_1,\xi_2) + k_2v(\xi_3,\xi_4) \\ p + v(\xi_1,\xi_2) + v(\xi_3,\xi_4) + k_3v(\xi_5,\xi_6(\bar \xi))\\
p + v(\xi_1,\xi_2) + v(\xi_3,\xi_4) + v(\xi_5,\xi_6(\bar \xi)) + k_4v(\xi_7(\bar \xi),\xi_8(\bar \xi))\end{array}\right),
\]
where we used the short hand notation $\bar \xi \doteq (\xi_1,\xi_2,\xi_3,\xi_4,\xi_5)$. Now we apply the inverse function theorem at the point defined by the exact values of Subsection \ref{exval}:
\[
(p^{\sigma_i}, k_1^{\sigma_i},k_2^{\sigma_i},k_3^{\sigma_i},k_4^{\sigma_i}, \xi_1^{\sigma_i},\xi_2^{\sigma_i},\xi_3^{\sigma_i},\xi_4^{\sigma_i},\xi_5^{\sigma_i}),
\]
Notice that the derivatives of $\xi_\ell(\bar \xi)$ at the point $(\xi_1^{\sigma_i},\xi_2^{\sigma_i},\xi_3^{\sigma_i},\xi_4^{\sigma_i},\xi_5^{\sigma_i})$ are explicitly provided by the implicit function theorem applied in the first part of this proof. Again with the help of a computer, one can check that
\[
\det(DF_i(p^{\sigma_i}, k_1^{\sigma_i},k_2^{\sigma_i},k_3^{\sigma_i},k_4^{\sigma_i}, \xi_1^{\sigma_i},\xi_2^{\sigma_i},\xi_3^{\sigma_i},\xi_4^{\sigma_i},\xi_5^{\sigma_i})) \neq 0,
\]
and hence that $F_i$ is a diffeomorphism around $(p^{\sigma_i}, k_1^{\sigma_i},k_2^{\sigma_i},k_3^{\sigma_i},k_4^{\sigma_i}, \xi_1^{\sigma_i},\xi_2^{\sigma_i},\xi_3^{\sigma_i},\xi_4^{\sigma_i},\xi_5^{\sigma_i})$. This shows \eqref{intorno}-\eqref{welldef} and concludes the proof.
\end{proof}

The proof of the following Proposition is analogous to the one of \cite[Proposition 2.7]{FS}, and uses Lemma \ref{IMT}.

\begin{prop}
Let $\{a_1,a_2,a_3,a_4\} \subset \R^3$ be defined as in \eqref{aaaa}, and denote $A_1 \doteq (a_1,a_2,a_3,a_4)$. Then, there exists $\delta > 0$ and for all $1\le \ell \le 4$ smooth maps
\[
\pi_\ell : (-\delta,\delta)^3\times B_\delta(A_1) \to \R^3
\]
with the following properties
\begin{itemize}
\item the map $t \mapsto \pi_\ell(t,X)$ is an embedding for each $X = (x_1,x_2,x_3,x_4) \in B_\delta(A_1)$;
\item $\pi_\ell(t,X) \in \{x_1,x_2,x_3, x_4\}^{\Lambda_\A}$ for all $t \in [0,\delta)^3$, $X = (x_1,x_2,x_3,x_4) \in B_\delta(A_1)$;
\item $\pi_\ell(0,X) = x_\ell$, for all $X = (x_1,x_2,x_3,x_4) \in B_\delta(A_1)$.
\end{itemize}
\end{prop}

With the help of the previous proposition, one can show the following result, see \cite[Theorem 2.8]{FS}.

\begin{theorem}\label{existenceinapp}
Let $\{a_1,a_2,a_3,a_4\} \subset \R^3$ be defined as in \eqref{aaaa}. Then, there exists a $\A$-in-approximation of $\{a_1,a_2,a_3,a_4\}$ for the operator $\A$ defined in \eqref{Axi}.
\end{theorem}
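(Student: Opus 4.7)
The plan, following \cite[Theorem 2.8]{FS}, is to use the parameterizations $\pi_\ell$ from the preceding proposition to construct a nested family of open sets $\{U_n\}_n$ that accumulates on $K \doteq \{a_1,a_2,a_3,a_4\}$ and satisfies $U_n \subset U_{n+1}^{\Lambda_\A}$. The large $\Lambda_\A$-$T_4$ structure is crucial here because it is exactly what ensures that at each $a_\ell$ one has three linearly independent $\Lambda_\A$-directions pointing into the $\Lambda_\A$-convex hull of $K$, so that the maps $\pi_\ell(\cdot, A_1)$ of the preceding proposition are genuine embeddings of a $3$-dimensional cube into $\R^3$ and therefore carve out open ``cones'' at each $a_\ell$.

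Concretely, I would fix $\delta>0$ as in the preceding proposition and pick two positive sequences $r_n \downarrow 0$, $\eta_n \downarrow 0$ with $r_n < \eta_n$ (to be specified below), and set
\[
V_\ell^n \doteq \pi_\ell\!\left((r_n,\eta_n)^3,\, A_1\right), \qquad U_n \doteq \bigcup_{\ell=1}^4 V_\ell^n.
\]
Since $\pi_\ell(\cdot, A_1)$ is a smooth embedding and $(r_n,\eta_n)^3$ is a non-empty open cube, each $V_\ell^n$ is open in $\R^3$; smoothness combined with $\pi_\ell(0,A_1)=a_\ell$ yields a constant $C>0$ such that $V_\ell^n \subset B_{C\eta_n}(a_\ell)$ for every $n$. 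In particular the $U_n$ are equibounded, pairwise disjoint copies of $V_\ell^n$ cluster only on $a_\ell$, and every sequence $\{X_n\}_n$ with $X_n \in U_n$ has all its limit points in $K$, so that $U_n \to K$ in the sense of Definition \ref{INAPP}.

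The heart of the argument is the verification of $U_n \subset U_{n+1}^{\Lambda_\A}$. Given $y \in V_\ell^n$, write $y = \pi_\ell(t_0, A_1)$ with $t_0 \in (r_n, \eta_n)^3$. Because $t \mapsto \pi_\ell(t,X)$ is a diffeomorphism for each $X$, the implicit function theorem produces a smooth solution $X \mapsto t(X)$ of $\pi_\ell(t(X),X)=y$ on some neighborhood of $A_1$, with $t(A_1)=t_0$ and a Lipschitz constant $L>0$ that can be taken uniform in $(t_0, X)$ over a fixed compact subset of $(0,\delta)^3 \times \overline{B_{\delta/2}(A_1)}$. Now pick any $X=(x_1,\dots,x_4)$ with $x_j \in V_j^{n+1}$, so $|X - A_1|\le 2C\eta_{n+1}$ and hence $|t(X)-t_0|_\infty \le 2CL\eta_{n+1}$. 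If the sequences are chosen so that
\[
2CL\,\eta_{n+1} < r_n \quad\text{and}\quad \eta_n + 2CL\,\eta_{n+1} < \delta
\]
hold for all $n$ (for instance $\eta_n \doteq \delta\,(4CL+2)^{-n}$ and $r_n \doteq \eta_n/2$), then $t(X) \in (0,\delta)^3$, and the preceding proposition gives
\[
y = \pi_\ell(t(X), X) \in \{x_1,\dots,x_4\}^{\Lambda_\A} \subset U_{n+1}^{\Lambda_\A},
\]
since $\{x_1,\dots,x_4\}$ is a compact subset of $U_{n+1}$. Varying $y$ and $\ell$ completes the verification and thus exhibits $\{U_n\}_n$ as an $\A$-in-approximation of $K$.

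The main technical point, and the one requiring care, is the uniformity of the Lipschitz constant $L$ of the implicit solution $t(X)$ as $t_0$ and $X$ vary over the relevant ranges. This reduces to a compactness argument using continuity of $\pi_\ell$ and non-degeneracy of $\partial_t \pi_\ell$ on the compact set $\overline{[0,\eta_1]^3\times B_{\delta/2}(A_1)}$, both of which are guaranteed by the preceding proposition. Once $L$ is obtained, the inductive choice of the sequences $r_n, \eta_n$ is immediate, and the remaining properties (equiboundedness of the $U_n$, accumulation on $K$) are straightforward consequences of the smoothness of $\pi_\ell$ and the decay $\eta_n \to 0$.
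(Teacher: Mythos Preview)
Your proposal is correct and follows essentially the same approach as the paper, which simply refers to \cite[Theorem 2.8]{FS}: you use the embeddings $\pi_\ell$ of the preceding proposition to carve out open neighborhoods of each $a_\ell$ and then verify $U_n\subset U_{n+1}^{\Lambda_\A}$ via the implicit function theorem and the stability of the $\Lambda_\A$-$T_4$ structure. The technical points you flag (uniform Lipschitz bound for $t(X)$, inductive choice of $r_n,\eta_n$) are exactly the ones that need checking and you handle them adequately.
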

\subsection{Exact values}\label{exval}
In this section we give all the exact values needed to see that the set $\{ a_1, a_2, a_3, a_4 \}$ of \eqref{aaaa} \emph{forms a large $\R^3$-$T_4$ configuration} in the sense of Definition \ref{T4} and that \eqref{Pc} is uniquely solvable. 
\\
\\
The permutations $\sigma_i, 1\le i \le 3$ are:\
\begin{equation}\label{permut}
\begin{split}
&(\sigma_1(1),\sigma_1(2),\sigma_1(3),\sigma_1(4)) = (1,2,3,4),\\
&(\sigma_2(1),\sigma_2(2),\sigma_2(3),\sigma_2(4)) = (4,1,2,3), \\
&(\sigma_3(1),\sigma_3(2),\sigma_3(3),\sigma_3(4)) = (3,4,1,2).
\end{split}
\end{equation}
\noindent The values $p^{\sigma_i}$ for $1 \leq i \leq 3$ are:
\begin{align*}
p^{\sigma_1}= \frac{1}{15}
\left(\begin{array}{c} 2 \\ 4 \\ 8\end{array}\right)
\ \ \ 
p^{\sigma_2}= \frac{1}{65}
\left(\begin{array}{c} 18 \\ 27 \\ 8\end{array}\right)
\ \ \ 
p^{\sigma_3}= \frac{1}{175}
\left(\begin{array}{c} 64 \\ 27 \\ 36\end{array}\right).
\end{align*}
\noindent The values $c_\ell^{\sigma_i}$ for $1 \leq \ell \leq 4$ and $1 \leq i \leq 3$ are:
\begin{align*}
c^{\sigma_1}_1= \frac{1}{15}
\left(\begin{array}{c} -1 \\ -2 \\ -4 \end{array}\right)
\ \ \ 
c^{\sigma_1}_2= \frac{1}{15}
\left(\begin{array}{c} 7 \\ -1 \\ -2\end{array}\right)
\ \ \ 
c^{\sigma_1}_3= \frac{1}{15}
\left(\begin{array}{c} -4 \\ 7 \\ -1\end{array}\right)
\ \ \ 
c^{\sigma_1}_4= \frac{1}{15}
\left(\begin{array}{c} -2 \\ -4 \\ 7\end{array}\right),
\end{align*}
\begin{align*}
c^{\sigma_2}_1= \frac{1}{65}
\left(\begin{array}{c} -6 \\ -9 \\ 19 \end{array}\right)
\ \ \ 
c^{\sigma_2}_2= \frac{1}{65}
\left(\begin{array}{c} -4 \\ -6 \\ -9\end{array}\right)
\ \ \ 
c^{\sigma_2}_3= \frac{1}{65}
\left(\begin{array}{c} 19 \\ -4 \\ -6\end{array}\right)
\ \ \ 
c^{\sigma_2}_4= \frac{1}{65}
\left(\begin{array}{c} -9 \\ 19 \\ -4\end{array}\right),
\end{align*}
\begin{align*}
c^{\sigma_3}_1= \frac{1}{175}
\left(\begin{array}{c} -16 \\ 37 \\ -9 \end{array}\right)
\ \ \ 
c^{\sigma_3}_2= \frac{1}{175}
\left(\begin{array}{c} -12 \\ -16 \\ 37\end{array}\right)
\ \ \ 
c^{\sigma_3}_3= \frac{1}{175}
\left(\begin{array}{c} -9 \\ -12 \\ -16\end{array}\right)
\ \ \ 
c^{\sigma_3}_4= \frac{1}{175}
\left(\begin{array}{c} 37 \\ -9 \\ -12\end{array}\right).
\end{align*}
\\
\\
The values $k_\ell^{\sigma_i}$ are $k_\ell^{\sigma_i}= i+1$  for any $1 \leq \ell \leq 4$ and $1 \leq i \leq 3$.
\\
\\
The values $(\xi^{\sigma_i}_{2\ell-1}, \xi^{\sigma_i}_{2\ell}) \in \R^2$, for $1 \leq \ell \leq 4$ and $1 \leq i \leq 3$ are given by:
\begin{align*}
\left(\begin{array}{c} \xi_{1}^{\sigma_1} \\ \xi_2^{\sigma_1} \end{array}\right) = \left(\begin{array}{c} -14 \\ 5 \end{array}\right)  
\ \ \ 
\left(\begin{array}{c} \xi_{3}^{\sigma_1} \\ \xi_4^{\sigma_1} \end{array}\right) = \left(\begin{array}{c} 19 \\ -8 \end{array}\right) 
\ \ \ 
\left(\begin{array}{c} \xi_{5}^{\sigma_1} \\ \xi_6^{\sigma_1} \end{array}\right) = \left(\begin{array}{c} 11 \\ -14 \end{array}\right) 
\ \ \ 
\left(\begin{array}{c} \xi_{7}^{\sigma_1} \\ \xi_8^{\sigma_1} \end{array}\right) = \left(\begin{array}{c} -4 \\ -17 \end{array}\right) , 
\end{align*}
\begin{align*}
\left(\begin{array}{c} \xi_{1}^{\sigma_2} \\ \xi_2^{\sigma_2} \end{array}\right) = \left(\begin{array}{c} -7 \\ -3 \end{array}\right)  
\ \ \ 
\left(\begin{array}{c} \xi_{3}^{\sigma_2} \\ \xi_4^{\sigma_2} \end{array}\right) = \left(\begin{array}{c} 6 \\ 16 \end{array}\right) 
\ \ \ 
\left(\begin{array}{c} \xi_{5}^{\sigma_2} \\ \xi_6^{\sigma_2} \end{array}\right) = \left(\begin{array}{c} 2 \\ -17 \end{array}\right) 
\ \ \ 
\left(\begin{array}{c} \xi_{7}^{\sigma_2} \\ \xi_8^{\sigma_2} \end{array}\right) = \left(\begin{array}{c} -18 \\ 2 \end{array}\right) , 
\end{align*}
\begin{align*}
\left(\begin{array}{c} \xi_{1}^{\sigma_3} \\ \xi_2^{\sigma_3} \end{array}\right) = \left(\begin{array}{c} -7 \\ -14 \end{array}\right)  
\ \ \ 
\left(\begin{array}{c} \xi_{3}^{\sigma_3} \\ \xi_4^{\sigma_3} \end{array}\right) = \left(\begin{array}{c} -9 \\ 19 \end{array}\right) 
\ \ \ 
\left(\begin{array}{c} \xi_{5}^{\sigma_3} \\ \xi_6^{\sigma_3} \end{array}\right) = \left(\begin{array}{c} 6 \\ 18 \end{array}\right) 
\ \ \ 
\left(\begin{array}{c} \xi_{7}^{\sigma_3} \\ \xi_8^{\sigma_3} \end{array}\right) = \left(\begin{array}{c} -20 \\ -9 \end{array}\right) .
\end{align*}

\appendix

\section{The three state problem for operators of order 1}

Here we show how to infer the rigidity of the three state problem \eqref{problem} from the rigidity of the three state problem of the divergence proved in \cite{PP}. We are indebted to Guido De Philippis for making us realize that in many cases the study of operators of order one reduces to the study of the divergence operator, thus greatly simplifying our original proof of the following result.

\begin{prop}\label{rig}
Let $\A \in \op(1,m,n,N)$ and let $u$ be a solution to \eqref{problem} on the open connected set $\Omega$ for $s = 3$. Then, $u$ is constant.
\end{prop}
\begin{proof}
Consider $v \doteq u - a_1$. Then, $v$ solves \eqref{problem} with $\{a_1,a_2,a_3\}$ replaced by $\{0,b_1,b_2\}$, with $b_1 = a_2 - a_1,b_2 = a_3 - a_1,b_1-b_2 = a_2 - a_3 \notin \Lambda_{\A}$. Now consider the operator $\A' \in  \op(1,m,2,N)$ defined as
\[
\A'(z_1,z_2) \doteq \A(z_1 b_1 + z_2 b_2)
\]
for all $z_i \in L^\infty(\Omega)$, $i =1,2$. Defining $w \doteq (\chi_{E_1},\chi_{E_2})$, $E_i \doteq \{x \in \Omega: u(x) = b_i\}$ and $e_1 = (1,0), e_2 =(0,1)$, it is easy to see that $w$ solves
\begin{equation}\label{probspec}
\begin{cases}
w(x) \in \{0,e_1,e_2\}, &\text{ a.e. on }\Omega\\
\mathcal{A}'(w) = 0, &\text{ in the sense of distributions},\\
e_1,e_2, e_1-e_2 \notin \Lambda_{\A'},
\end{cases}
\end{equation}
and that $u$ is constant if and only if $w$ is constant. Since $\A' \in  \op(1,m,2,N)$, by Definition \ref{d_operator} it admits a representation of the form
\[
\mathcal{A'}(z_1,z_2) = MDz_1 + NDz_2 = \dv(Mz_1 + Nz_2),  
\]
for $M,N \in \R^{N\times m}$ and all bounded $(z_1,z_2)$. The latter and the fact that $e_1,e_2, e_1-e_2 \notin \Lambda_{\A'}$ easily imply that \eqref{probspec} is equivalent to the fact that $Z(x) = Mw_1 + Nw_2$ solves:
\[
\begin{cases}
Z(x) \in \{0,M,N\}, &\text{ a.e. on }\Omega\\
\dv(Z) = 0, &\text{ in the sense of distributions},\\
M,N, M-N \notin \Lambda_{\dv}.
\end{cases}
\]
By \cite{PP}, we know that $Z$ is constant, and hence also $w$ and $u$ must be constant.
\end{proof}

\bibliographystyle{plain}
\bibliography{Four}
\end{document}